\newcommand{\cmfield}{CM-field}
\newcommand{\cmfields}{CM-fields}
\newcommand{\cmtype}{CM-type}
\def\F{\mathbf F}
\def\Q{\mathbf Q}
\def\Z{\mathbf Z}
\def\C{\mathbf C}
\def\dim{\mathrm{dim}}
\def\End{\mathrm{End}}
\def\O{\mathcal{O}}
\def\ord{\mathop{\mathrm{ord}}}
\newtheorem{theorem}{Theorem}
\newtheorem{lemma}[theorem]{Lemma}
\newtheorem{corollary}[theorem]{Corollary}
\theoremstyle{definition}
\newtheorem{algorithm}{Algorithm}{\bfseries}{\rmfamily}
\theoremstyle{remark}
\newtheorem{remark}[theorem]{Remark}
\newtheorem{example}[theorem]{Example}
\newcommand{\abs}[1]{\left|#1\right|}
\newcommand{\cl}{\mathrm{Cl}}
\newcommand{\liftC}{C}
\newcommand{\redC}{\widetilde{C}}
\newcommand{\liftA}{A}
\newcommand{\jacC}{J_C}
\renewcommand{\dim}{\mathop{\mathrm{dim}}}
\newcommand{\Gal}{\mathrm{Gal}}
\newcommand{\pfactor}{\mathfrak{p}_1\overline{\mathfrak{p}_1}\mathfrak{p}_2 }
\newcommand{\pifactor}{\mathfrak{p}_1^2\mathfrak{p}_2 }
\newcommand{\im}{\mathop{\mathrm{im}}}
\renewcommand{\index}[1]{}
\newcommand{\aimention}[1]{}
\newcommand{\mainentry}[1]{}
\newcommand{\toplinealgorithm}{

\noindent\underline{\hspace{\textwidth}}

}
\newcommand{\middlelinealgorithm}{

\noindent\underline{\hspace{\textwidth}}}
\newcommand{\bottomlinealgorithm}{
\vspace{-\baselineskip}

\noindent\underline{\hspace{\textwidth}}

}
\begin{document}

\protect \title{{A CM construction for curves of genus 2 with \MakeLowercase{${p}$}-rank~1}}

\protect \author[L.~Hitt]{Laura~Hitt~O'Connor}
\address{Laura~Hitt~O'Connor and Gary~McGuire
\\
School of Mathematical Sciences\\
University College Dublin, Ireland}
\email{\href{mailto:hitt36@gmail.com}{hitt36@gmail.com}, \href{mailto:gary.mcguire@ucd.ie}{gary.mcguire@ucd.ie}}
\thanks{Research of the first author supported by Science Foundation Ireland Post-Doctoral Grant
07/RFP/ENM123}
\author[G.~McGuire]{Gary~McGuire}
\thanks{Research of the second author supported by the Claude Shannon Institute, Science Foundation Ireland Grant 06/MI/006}
\author[M.~Naehrig]{Michael~Naehrig}
\address{Michael~Naehrig\\
Department of Mathematics and Computer Science, 
Eindhoven University of Technology, Den Dolech 2, 5600 MB Eindhoven, the Netherlands\\
and Microsoft Research, One Mirosoft Way, Redmond, WA 98052, USA}
\email{\href{mailto:michael@cryptojedi.org}{michael@cryptojedi.org}}
\urladdr{\url{http://www.cryptojedi.org/users/michael/}}
\author[M.~Streng]{Marco~Streng}
\address{Marco~Streng\\ Mathematisch Instituut\\
Universiteit Leiden\\
Postbus 9512\\
2300 RA Leiden\\
the Netherlands}
\email{\href{mailto:streng@math.leidenuniv.nl}{streng@math.leidenuniv.nl}}
\urladdr{\url{http://www.math.leidenuniv.nl/~streng/}}

\pagestyle{fancy}
    \fancyhead{}
    \fancyfoot{}
\fancyhead[CE]{L.~Hitt, G.~McGuire, M.~Naehrig, and M.~Streng}
\fancyhead[CO]{Curves of genus 2 with $p$-rank~1, preprint of \today}
\fancyfoot[CO,CE]{\thepage }
\setlength{\headheight}{14pt}

\begin{abstract}
We construct Weil numbers corresponding
to genus-$2$ curves with $p$-rank~$1$
over the finite field $\F_{p^2}$ of $p^2$ elements.
The corresponding curves can be constructed using
explicit CM constructions.
In one of our algorithms,
the group of $\F_{p^2}$-valued points of the Jacobian has prime order,
while another allows for a 
prescribed embedding degree with respect to a subgroup
of prescribed order.
The curves are defined over $\F_{p^2}$ out of necessity:
we show that
curves of $p$-rank $1$ over $\F_p$ for large~$p$ cannot
be efficiently constructed using explicit CM constructions.
\end{abstract}

\protect \maketitle

\section{Introduction}\label{sec:intro}

The \emph{$p$-rank}\mainentry{p-rank@$p$-rank}
of an abelian variety $A$ over a field
$k$ of characteristic $p$ is the integer $r=r(A)$ such that
the group $A[p](\overline{k})$ of $p$-torsion points
over an algebraic closure $\overline{k}$ of $k$ has order~$p^r$.
It satisfies $0\leq r\leq g$, where $g$ is the dimension
of $A$, and we call $A$ \emph{ordinary}\index{ordinary} if $r$ is equal to~$g$.
If $A$ is \emph{supersingular},\index{supersingular} that is, if $A$ becomes isogenous
over $\overline{k}$ to a product of supersingular elliptic curves,
then we have $r=0$, and
the converse holds for abelian surfaces:
if $r=0$ and $g=2$, then $A$ is supersingular.

This shows that for an abelian surface $A$, besides the
ordinary and supersingular cases,
there is only one \emph{intermediate}\index{intermediate abelian variety} case:
the case where $A$ has $p$-rank~$1$.
Most CM constructions of curves of genus two
\cite{spallek,weng,freeman-g2,freeman-stevenhagen-streng}
generate curves that are
ordinary with probability tending to~$1$,
while another \cite{rubin-silverberg}
constructs only supersingular curves.
We focus on the intermediate case,
for which no constructions existed yet.

The $p$-rank $r(A)$ depends only on the \emph{isogeny class}
of $A$ over~$\overline{k}$, and any simple abelian surface $A$
of $p$-rank~$1$ over a finite field $k$ is
isogenous to the Jacobian of a curve
over $k$ of genus~$2$
(see Section~\ref{sec:prank1numbers}).
By the $p$-rank of a curve $C$,\index{p-rank@$p$-rank}
we mean the $p$-rank of its Jacobian~$\jacC$.

Let $k$ be the finite field of order $q=p^n$.
The Frobenius endomorphism $\pi$ of a simple abelian variety over $k$
is a \emph{Weil $q$-number},\index{Weil number} i.e.,
an algebraic integer $\pi$ such that
$\abs{\pi}^2=q$ holds for every embedding of
the field $K=\Q(\pi)$ into the complex numbers.
A theorem of Honda and Tate \cite{honda-tate} states that
this defines a bijection between the set of isogeny classes of
simple abelian varieties over $k$
and the set of Weil $q$-numbers
up to Galois conjugacy.

We characterize those Weil numbers corresponding
to abelian surfaces with $p$-rank $1$ in
Section~\ref{sec:prank1numbers},
show their existence in Section~\ref{sec:existence}
and give algorithms for finding them in
Section~\ref{sec:algorithms}.
In Section~\ref{sec:existence} we also explain why
curves of $p$-rank $1$ over $\F_p$ for large~$p$ cannot
be efficiently constructed using explicit CM constructions.

The construction of an abelian variety $A$ corresponding to
a given Weil $q$-number $\pi$ dates back to Shimura
and Taniyama \cite{shimura-taniyama} and Honda \cite{honda}.
It exhibits $A$ as the reduction of a
characteristic-$0$ abelian variety with
\emph{complex multiplication (CM)}\index{complex multiplication}\index{CM} by $\Z[\pi]$
and is also known as the \emph{CM method}.\index{CM method}
We explain this explicit CM construction
in Section \ref{sec:cmmethod}.
For now, it suffices to say that the computational
complexity of this construction
grows very rapidly with the size of the field~$K=\Q(\pi)$.
Therefore, our algorithms will
look for Weil $q$-numbers $\pi$
only in fixed small input fields~$K$.

Let $A$ be an abelian variety over the finite field $k$
and suppose that $A(k)$ has a subgroup of prime order $r$.
The \emph{embedding degree}\index{embedding degree} of $A$ with respect to $r$ is the degree
of the field extension $k(\zeta_r)/k$, where $\zeta_r$ is a primitive
$r$-th root of unity.
The \emph{Weil} and \emph{Tate} pairings\index{pairing} on $A$
with respect to $r$ have
their image in $\langle \zeta_r\rangle\subset k(\zeta_r)^*$,
and in order
to compute these pairings,
one needs to work with $k(\zeta_r)$.
As the embedding degree is the order of $q$ in $(\Z/r\Z)^*$,
it is close to $r$ for most curves,
while for \emph{pairing-based cryptography}, one wants
$r$ to be large and
the embedding degree to be small.
Algorithm \ref{algo3} in Section~\ref{sec:algorithms}
provides curves with $p$-rank $1$
and a prescribed
small embedding degree.

We used our algorithms to compute various examples,
which we give in Section~\ref{examples}.
Each example was computed in a few seconds on a standard PC.

\section{Characterization of abelian surfaces of
\texorpdfstring{$p$}{p}-rank 1}\label{sec:prank1numbers}

It follows from the definition that the $p$-rank
$r(A)$ of an abelian variety $A$
does not change under extensions of the base field,
and that it satisfies $r(A\times B)=r(A)+r(B)$ for any
pair of abelian varieties $A$ and $B$.
It is also well-known that the $p$-rank is invariant
under isogeny (see Lemma \ref{lem:prank1formula} below).
In particular, the non-simple abelian surfaces of $p$-rank~$1$
are exactly those isogenous to the product of an ordinary
and a supersingular elliptic curve. Both
types of elliptic curves are well understood, so
we focus on \emph{simple}\index{simple abelian variety} abelian surfaces.
We use the word \emph{isogeny} to mean isogeny defined over the base
field $k$, unless otherwise stated. We use the same convention
for the definition of \emph{simple} abelian variety.

Our algorithms are based on a characterization
of Weil numbers corresponding to simple abelian surfaces
of $p$-rank~$1$, which we give in this section.
A major part of this characterization
can already be found in Goren~\cite{goren} and
Gonzalez~\cite[proof of Thm.~3.7]{gonzalez},
but we give a proof,
as this result is the foundation of our construction.

Let $k$ be the finite field of $q=p^n$ elements
and let $\pi$ be a Weil $q$-number.
For every embedding of the field $K=\Q(\pi)$ into~$\C$,
complex conjugation on $K$ is given by $\pi\mapsto q/\pi$.
As this automorphism of $K$ doesn't depend
on the choice of the embedding, we denote it
by~$x\mapsto \overline{x}$ and call it complex conjugation.
If we let $K_0$ be the fixed field of
complex conjugation, then
$K_0$ is totally real and $K$ is either equal to $K_0$
or it is a \emph{\cmfield},\index{CM-field} that is, a totally imaginary
quadratic extension of a totally real number field.
\begin{lemma}\label{lem:mainlemma}
  A simple abelian variety $A$ over the field $k$
  of $q=p^n$ elements has dimension $2$ and
  $p$-rank~$1$ if and only if the following
  three conditions hold for its Frobenius endomorphism~$\pi$:
\begin{enumerate}
 \item the field $K=\Q(\pi)$ is a {\cmfield} of degree $4$,
 \item the prime $p$ factors in $K$ as $p\O_K=\pfactor ^e$,
       with $e\in\{1,2\}$, and
 \item we have $\pi\O_K=\mathfrak{p}_1^n\mathfrak{p}_2^{\smash{en/2}}$ with $e$ as in~\textup{(2)}.
\end{enumerate}
Note that condition \textup{(3)} implies that $en$ is even.
\end{lemma}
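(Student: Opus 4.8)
The plan is to derive both implications from the Honda--Tate theory of $D := \End^0(A)$ \cite{honda-tate}. Write $K = \Q(\pi)$; then $D$ is a division algebra with centre $K$, one has $2\dim A = [K:\Q]\sqrt{[D:K]}$, the Weil polynomial of the simple variety $A$ equals $f_\pi^{\sqrt{[D:K]}}$ with $f_\pi$ the minimal polynomial of $\pi$ over $\Q$, and Tate's formula gives the local invariant $\mathrm{inv}_{\mathfrak p}(D) = \tfrac{v_{\mathfrak p}(\pi)}{v_{\mathfrak p}(q)}\,[K_{\mathfrak p}:\Q_p] \bmod 1$ at a prime $\mathfrak p\mid p$ of $K$, all invariants of $D$ away from $p$ being trivial. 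Finally, recall that the $p$-rank of $A$ is the number of zero slopes of its Newton polygon (that of the Weil polynomial), and that when $D = K$ one has $A[p^\infty] = \prod_{\mathfrak p\mid p} A[\mathfrak p^\infty]$ with $A[\mathfrak p^\infty]$ isoclinic of slope $v_{\mathfrak p}(\pi)/v_{\mathfrak p}(q)$ and height $[K_{\mathfrak p}:\Q_p]$ (Lemma~\ref{lem:prank1formula}).

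For the forward implication, assume $\dim A = 2$ and $r(A) = 1$. The Newton polygon of an abelian surface is one of $(0,0,1,1)$, $(0,\tfrac12,\tfrac12,1)$, or $(\tfrac12,\tfrac12,\tfrac12,\tfrac12)$, of $p$-rank $2$, $1$, $0$ respectively; since $r(A) = 1$, the Newton polygon of $A$ is $(0,\tfrac12,\tfrac12,1)$. Because the Newton polygon of $f_\pi^{\sqrt{[D:K]}}$ is that of $f_\pi$ with all multiplicities scaled by $\sqrt{[D:K]}$, and slope $0$ here has multiplicity exactly $1$, we conclude $\sqrt{[D:K]} = 1$, so $D = K$ and $[K:\Q] = 4$. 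Moreover $\pi\mapsto q/\pi$ is a nontrivial automorphism of $K$ (otherwise $\pi^2 = q$, forcing $[K:\Q]\le 2$), so $K\ne K_0$, and $K$ is a \cmfield{} by the remarks preceding the lemma; this is condition~(1).

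Still under $\dim A = 2$, $r(A) = 1$, I would now read off conditions (2) and (3) from the shape $(0,\tfrac12,\tfrac12,1)$. As $D = K$, each prime $\mathfrak p\mid p$ gives an isoclinic summand $A[\mathfrak p^\infty]$ of slope $s_{\mathfrak p}\in\{0,\tfrac12,1\}$ and height $[K_{\mathfrak p}:\Q_p]$, with $s_{\mathfrak p} + s_{\overline{\mathfrak p}} = 1$ and $\mathrm{inv}_{\mathfrak p}(D) = s_{\mathfrak p}\,[K_{\mathfrak p}:\Q_p] \equiv 0 \bmod 1$. The slope-$0$ part, of length $1$, comes from a unique prime $\overline{\mathfrak p_1}$ with $[K_{\overline{\mathfrak p_1}}:\Q_p] = 1$, whose conjugate $\mathfrak p_1$ is a distinct prime of local degree $1$ and slope $1$. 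The slope-$\tfrac12$ part, of length $2$, cannot come from two primes of local degree $1$ (each would have $\mathrm{inv}_{\mathfrak p}(D) = \tfrac12 \not\equiv 0$), so it comes from a single self-conjugate prime $\mathfrak p_2$ of local degree $2$; writing $e\in\{1,2\}$ for its ramification index over $p$ yields $p\O_K = \mathfrak p_1\overline{\mathfrak p_1}\mathfrak p_2^{e}$, which is~(2). Then $v_{\mathfrak p_1}(\pi) = v_{\mathfrak p_1}(q) = n$, $v_{\overline{\mathfrak p_1}}(\pi) = 0$, and $v_{\mathfrak p_2}(\pi) = \tfrac12 v_{\mathfrak p_2}(q) = en/2$, so $\pi\O_K = \mathfrak p_1^{n}\mathfrak p_2^{en/2}$, which is~(3); and $en/2$, being a valuation of $\pi$, is an integer, so $en$ is even. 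For the converse, one runs this computation backwards: (2) gives the factorization type — conjugate primes $\mathfrak p_1\ne\overline{\mathfrak p_1}$ of local degree $1$ and a self-conjugate $\mathfrak p_2$ of local degree $2$ — and (3) the valuations $v_{\mathfrak p}(\pi)$ (compatible with $\pi\overline\pi = q$ after conjugating); Tate's formula then makes every $\mathrm{inv}_{\mathfrak p}(D)$ an integer ($1$, $0$, $1$ respectively), all other invariants vanish since $K$ is totally imaginary, so $D$ has trivial Brauer class and $D = K$, $\dim A = \tfrac12[K:\Q] = 2$; finally the three $p$-divisible summands, of slopes $1,0,\tfrac12$ and heights $1,1,2$, give $r(A) = 1$.

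I expect the forward implication to be the main obstacle. Identifying $D = K$ and $[K:\Q] = 4$ is quick once the Newton polygon is pinned down, but the delicate step is the third one: using the precise shape $(0,\tfrac12,\tfrac12,1)$ together with the vanishing of all local invariants of $D$ to force the factorization type of $p$ — in particular to exclude $p$ splitting with two slope-$\tfrac12$ primes — while keeping the normalizations in Tate's invariant formula and in the slope/height bookkeeping mutually consistent.
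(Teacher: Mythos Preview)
Your argument is correct and rests on the same Honda--Tate core as the paper's: both first establish $D=K$ (equivalently $m=1$ in the paper's notation) and then use the vanishing of Tate's local invariants to rule out the possibility that the remaining two units of local degree come from a pair of split primes of slope~$\tfrac12$. The difference is packaging. The paper isolates the $p$-rank computation as a separate formula (Lemma~\ref{lem:prank1formula}), $r(A)=\sum_{\pi\notin\mathfrak p_i} m\,e_if_i$, reads off $m=1$ and the shape of $\overline{\mathfrak p_1}$ directly from $r(A)=1$, and then excludes the split case by invoking \cite[Thm.~1(1)]{honda-tate} to force $\ord_{\mathfrak q_1}(\pi)\in\{0,n\}$. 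You instead pass through the Newton polygon $(0,\tfrac12,\tfrac12,1)$ and the decomposition of $A[p^\infty]$ into isoclinic pieces, which makes the slope/height bookkeeping and the invariant computation more explicit. Both routes are standard and of comparable length; yours is a bit more self-contained, while the paper's defers more to its auxiliary lemma and to \cite{honda-tate}. One minor remark: what you cite as Lemma~\ref{lem:prank1formula} is not literally the isoclinic decomposition you describe but the equivalent counting formula; the decomposition you use is the Shimura--Taniyama slope formula that underlies that lemma's proof.
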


We prove Lemma \ref{lem:mainlemma} using
the following formula for the $p$-rank of an abelian variety.
\begin{lemma}[{\cite[Prop.~3.1]{gonzalez}}]\label{lem:prank1formula}
  Let $A$ be a simple abelian variety over $k$ and let $K=\Q(\pi)$,
  where $\pi$ is the Frobenius endomorphism of $A$.
  There is an integer $m$ such that $2\dim(A)=m\deg K$ holds.
  Suppose that $p$ factors in $K$ as
  $p\O_K=\prod_i \mathfrak{p}_i^{e_i}$ and let
  $f_i$ be given by $\#(\O_K/\mathfrak{p}_i)=p^{f_i}$.
  Then we have
  $r(A)=\sum me_if_i$, where the sum is taken over those $i$
  for which $\pi\not\in \mathfrak{p}_i$ holds.
\end{lemma}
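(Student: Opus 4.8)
The plan is to reduce the formula to two essentially independent ingredients: a structural fact that the $p$-rank counts the ``unit-root'' eigenvalues of Frobenius, and a purely number-theoretic translation of that count into the factorization of $p$ in $\O_K$. Throughout I fix an embedding of $\overline{\Q}$ into $\overline{\Q_p}$ and normalize the $p$-adic valuation $v_p$ by $v_p(p)=1$. The first step is to relate the characteristic polynomial $P_\pi$ of $\pi$ acting on an $\ell$-adic Tate module (a monic polynomial in $\Z[T]$ of degree $2\dim(A)$) to the minimal polynomial $m_\pi$ of $\pi$ over $\Q$ (of degree $\deg K$). Since $m_\pi(\pi)=0$ as an endomorphism, every eigenvalue of $\pi$ is a root of $m_\pi$; as $m_\pi$ is irreducible and the multiset of roots of $P_\pi$ is stable under $\Gal(\overline{\Q}/\Q)$, which acts transitively on the conjugates of $\pi$, all roots of $m_\pi$ occur in $P_\pi$ with one common multiplicity $m$. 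Hence $P_\pi=m_\pi^m$ and $2\dim(A)=m\deg K$, which simultaneously produces the integer $m$ of the statement.

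The key structural input is the claim that $r(A)$ equals the number of roots $\alpha$ of $P_\pi$ in $\overline{\Q_p}$, counted with multiplicity, for which $v_p(\alpha)=0$. This is where the genuine content lies. I would establish it through the (contravariant) Dieudonné module of the $p$-divisible group $A[p^\infty]$ after base change to $\overline{k}$: the $q$-power Frobenius acts on this module, its Newton slopes are exactly the numbers $v_p(\alpha)/n$ ranging over the eigenvalues $\alpha$ of $\pi$, and by the Dieudonné--Manin classification the slope-$0$ part is precisely the étale part, whose height is by definition the $p$-rank $r(A)=\dim_{\F_p}A[p](\overline{k})$.

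Granting this, the remainder is bookkeeping. I would group the roots of $P_\pi=m_\pi^m$ according to the prime $\mathfrak p_i\mid p$ that they induce: the irreducible factors of $m_\pi$ over $\Q_p$ correspond to the primes $\mathfrak p_i$, the factor attached to $\mathfrak p_i$ has degree $e_if_i=[K_{\mathfrak p_i}:\Q_p]$, and each of its $e_if_i$ roots has the same valuation $v_p=\ord_{\mathfrak p_i}(\pi)/e_i$, since the restriction of $v_p$ to $K$ at $\mathfrak p_i$ equals $\frac{1}{e_i}\ord_{\mathfrak p_i}$. This valuation vanishes precisely when $\ord_{\mathfrak p_i}(\pi)=0$, i.e. when $\pi\notin\mathfrak p_i$. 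Counting roots of $P_\pi$, so that each prime $\mathfrak p_i$ contributes $m\cdot e_if_i$, yields $r(A)=\sum me_if_i$ with the sum taken over the indices $i$ for which $\pi\notin\mathfrak p_i$, as claimed; the consistency check $m\sum_i e_if_i=m\deg K=2\dim(A)=\deg P_\pi$ confirms the count.

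The main obstacle is the structural step of the second paragraph, identifying the $p$-rank with the unit-root multiplicity of Frobenius; everything before and after it is formal algebraic number theory. If one prefers to avoid invoking Dieudonné--Manin directly, an alternative is to compute $\dim_{\F_p}A[p](\overline{k})$ from the connected--étale sequence of the group scheme $A[p]$ together with the interplay of Frobenius and Verschiebung, but some input of this flavour is unavoidable, since the $p$-rank genuinely depends on the $p$-adic valuations of the Frobenius eigenvalues and cannot be recovered from the $\ell$-adic data alone.
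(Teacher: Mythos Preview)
Your argument is correct but follows a genuinely different route from the paper's. The paper never invokes Dieudonn\'e modules or Newton slopes; instead it computes $p^{r(A)}$ as the \emph{separable degree} of the multiplication-by-$p$ isogeny. After replacing $A$ by an isogenous variety with $\O_K\subset\End_k(A)$, it factors $[p]$ into the composition of the multiplication-by-$\mathfrak{p}_i$ maps (each occurring $e_i$ times), notes that $\deg[\mathfrak{p}_i]=p^{mf_i}$ by Shimura--Taniyama, and then observes that $[\mathfrak{p}_i]$ is totally inseparable when $\pi\in\mathfrak{p}_i$ (since it then divides the purely inseparable Frobenius) and separable when $\pi\notin\mathfrak{p}_i$. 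Multiplicativity of the separable degree gives the formula directly. Your approach, by contrast, identifies $r(A)$ with the number of unit-root Frobenius eigenvalues via the \'etale part of $A[p^\infty]$ and the Dieudonn\'e--Manin classification, and then reads off that count from the $p$-adic factorization of the minimal polynomial. The paper's argument is more elementary in its inputs (no $p$-divisible groups, no slope filtration), while yours is more structural and ties the result to the Newton stratification; both deliver the integer $m$ essentially the same way, via $P_\pi=m_\pi^m$ or the equivalent citation of Honda--Tate.
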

\begin{proof}
The \emph{degree} $\deg g$ and \emph{separable degree} $\deg_{\mathrm{s}} g$
of an isogeny $g:A\rightarrow B$
of abelian varieties are defined to be the degree and separable
degree of the induced embedding of function fields
$g^*:k(B)\rightarrow k(A)$.
We have $\#(\ker g)(\overline{k})=\deg_{\mathrm{s}}g$, hence
$p^{r(A)}$ is the separable degree of the multiplication-by-$p$ map on~$A$.
As the separable degree is multiplicative under composition,
we find that the $p$-rank of $A$ depends only on its isogeny class,
hence we can assume
that $\End_{k} A$ contains the maximal order $\O_K$ by
\cite[Prop.~7 in \S7.1]{shimura-taniyama}.

The existence of $m$ follows from \cite[Thm.~1(2)]{honda-tate}.
The theory in \cite[\S7]{shimura-taniyama} shows how to factor the multiplication-by-$p$
map into multiplication-by-$\mathfrak{p}_i$ maps for prime ideals $\mathfrak{p}_i$,
and that the multiplication-by-$\mathfrak{p}_i$ map has degree $p^{f_im}$.
The Frobenius endomorphism $\pi$
is totally inseparable by
\cite[Thm.~1(iii) in \S2.8]{shimura-taniyama},
hence so is multiplication-by-$\mathfrak{p}_i$ if $\mathfrak{p}_i$ contains~$\pi$.
If $\mathfrak{p}_i$ is coprime to $\pi$, then
\cite[Prop.~6 in \S2.8]{shimura-taniyama} shows that it is separable,
hence satisfies $\deg_{\mathrm{s}}\mathfrak{p}_i=\deg\mathfrak{p}_i$.
\end{proof}
\begin{proof}[\textbf{\textup{Proof of Lemma \ref{lem:mainlemma}}}]
  If $A$ has dimension $2$ and $p$-rank $1$, then Lemma
  \ref{lem:prank1formula} tells us $m=1$, hence $K$
  has degree $4$ and
  exactly one prime $\overline{\mathfrak{p}_1}|p$
  with $\pi\not\in \overline{\mathfrak{p}_1}$, which
  is unramified and has residue degree~$1$.
  This implies $p\mathcal{O}_K=\mathfrak{p}_1\overline{\mathfrak{p}_1}\mathfrak{q}$,
  where $\mathfrak{q}$ is prime in the fixed field $K_0$
  of complex conjugation.

  To prove that (2) and (3) hold, it now suffices
  to prove that $\mathfrak{q}$ does not split in $K/K_0$.
  Suppose that it does, say $\mathfrak{q}=\mathfrak{q}_1\overline{\mathfrak{q}_1}$.
  Then by \cite[Thm.~1(1)]{honda-tate}, the fact $m=1$
  implies that $\ord_{\mathfrak{q}_1}(\pi)$ is either $0$
  or equal to the degree $n=\deg k/\F_p$.
  We also have $\ord_{\mathfrak{q}_1}(\pi)+\ord_{\overline{\mathfrak{q}_1}}(\pi)=\ord_{\mathfrak{q}_1}(\pi\overline{\pi})=n$,
  hence one of $\mathfrak{q}_1$ and $\overline{\mathfrak{q}_1}$ does not divide $\pi$, i.e., contradicts
  uniqueness of~$\overline{\mathfrak{p}_1}$.
  
  Conversely, if $\pi$ satisfies (1), (2), and (3), then
  Lemma \ref{lem:prank1formula} implies $r(A)=m$
  with $2\dim (A)=m\deg K$ and
  \cite[Thm.~1(1)]{honda-tate} implies $m=1$.
\end{proof}
\begin{corollary}
   A simple abelian surface $A/k$ of $p$-rank~$1$ is absolutely simple,
   that is, simple over $\overline{k}$,
   and is isogenous to the Jacobian of a curve $C$ over~$k$.
\end{corollary}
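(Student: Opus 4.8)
The plan is to prove the two assertions in turn, establishing absolute simplicity first since the Jacobian statement relies on it.

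For absolute simplicity I would argue by contradiction: suppose $A$ is simple over $k$ but $A_{\overline{k}}$ is not. The isogeny decomposition of $A_{\overline{k}}$ into simple factors is already defined over some finite extension $k'/k$, and $\Gal(k'/k)$ permutes these factors. This action is transitive on the set of isogeny classes that occur, for otherwise the abelian subvariety of $A_{\overline{k}}$ cut out by a proper Galois-stable sub-collection of factors would, being Galois-stable, descend to a proper non-trivial abelian subvariety of $A$, contradicting $k$-simplicity. Over a finite field, any two Galois-conjugate abelian varieties are isogenous over $\overline{k}$ (they differ by a power of the Frobenius isogeny), so all simple factors of $A_{\overline{k}}$ are mutually isogenous; hence $A_{\overline{k}}$ is isogenous to $B^m$ for a single absolutely simple $B$ with $m\geq 2$. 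As $\dim A=2$, this forces $m=2$ and $\dim B=1$, so $B$ is an elliptic curve. But then invariance of the $p$-rank under base extension and its additivity on products give $r(A)=r(A_{\overline{k}})=2\,r(B)\in\{0,2\}$, contradicting $r(A)=1$. Hence $A$ is absolutely simple.

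For the Jacobian statement I would first replace $A$, within its $k$-isogeny class, by a principally polarized abelian surface $(A',\lambda)$; this is possible because every simple abelian surface over a finite field is isogenous over the base field to a principally polarized one, a fact I would cite. By the first part $A'$ is absolutely simple, so Weil's classification of indecomposable principally polarized abelian surfaces applies: $(A'_{\overline{k}},\lambda)$ is either a product of two principally polarized elliptic curves --- impossible here, as it would force $A'_{\overline{k}}$ to split --- or the canonically polarized Jacobian of a smooth projective genus-$2$ curve $C_0$ over $\overline{k}$. In the remaining case, Torelli for curves of genus $2$ together with Galois descent yields a smooth genus-$2$ curve $C$ over $k$ whose canonically polarized Jacobian is isomorphic to $(A',\lambda)$; here one uses that, unlike in genus $3$, no quadratic-twist obstruction intervenes in genus $2$. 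Therefore $A$ is isogenous over $k$ to $\jacC$, as claimed.

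The main obstacle, in both parts, is to descend from $\overline{k}$ to $k$. In the first part this means knowing that Galois-stable isogeny factors descend and that conjugate abelian varieties over $\overline{\F_p}$ are isogenous, which I would settle by the descent argument sketched above. In the second part --- and this is the more serious point --- it means producing a principally polarized member of the $k$-isogeny class and then checking that the genus-$2$ curve furnished by Weil's theorem is genuinely defined over $k$; for the first I would invoke the known result for abelian surfaces over finite fields, and for the second the relevant input is precisely the absence of a twisting obstruction in genus $2$.
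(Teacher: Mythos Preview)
Your proposal is correct, but it diverges from the paper's proof in both parts.

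For absolute simplicity, the paper argues directly from the prime factorization in Lemma~\ref{lem:mainlemma}: condition~(3) forces $\Q(\pi^d)=K$ for every $d\geq 1$, so the characteristic polynomial of $\pi^d$ on the Tate module is irreducible of degree~$4$, which is incompatible with any splitting $A_{k'}\sim E\times F$ into elliptic curves over a degree-$d$ extension. Your argument instead avoids the explicit Weil-number structure entirely, deducing that any geometric splitting would have to be into two \emph{isogenous} elliptic curves (via Galois transitivity and the Frobenius isogeny between conjugates), and then killing this with $p$-rank additivity. The paper's route is shorter because the structural input is already at hand; yours is more portable, since it would apply to any simple abelian surface of $p$-rank~$1$ without first pinning down the factorization of~$\pi$.

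For the Jacobian statement, the paper simply invokes \cite[Theorem~4.3]{maisner-nart}, which says that every absolutely simple abelian surface over a finite field is isogenous to a Jacobian. Your sketch via a principally polarized representative, Weil's dichotomy, and Torelli-plus-descent is essentially an outline of how one proves that theorem; it is correct, and your remark that genus~$2$ carries no quadratic-twist obstruction is the right point to flag. The trade-off is the obvious one: citation versus exposition.
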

\begin{proof}
   Suppose that $k'/k$ is an extension of degree $d$ such that
   we have $A_{k'}\sim E\times F$.
   The Frobenius endomorphism of $A_{k'}$ is $\pi^d$ and the
   characteristic polynomial of its action on the $\ell$-adic
   Tate module of $A$ for $l\not=p$ is the product of the (quadratic)
   characteristic polynomials of the action on the Tate modules
   of $E$ and $F$.
   
   On the other hand, part (3) of Lemma \ref{lem:mainlemma}
   implies that $\Q(\pi^d)$ is equal to $K$, which is a field
   of degree~$4$. This is a contradiction, hence $A$ is absolutely simple.

   By \cite[Theorem 4.3]{maisner-nart},
   any absolutely simple abelian surface over a finite field $k$
   is isogenous to the Jacobian of a curve.
\end{proof}
\begin{remark}
  The conditions \textup{(1)}, \textup{(2)}, and \textup{(3)} of Lemma \ref{lem:mainlemma}
  are equivalent to conditions (M)
  of Theorem 2.9 of Maisner and Nart~\cite{maisner-nart},
  i.e., to the characteristic polynomial $f=X^4-a_1X^3+(a_2+2q)X^2-qa_1X+q^2$
  of $\pi$ satisfying
  \begin{enumerate}
  \item $f$ is irreducible,
  \item $\ord_p(a_1)=0$,
  \item $\ord_p(a_2)\geq n/2$,
  \item and that $(a_2+4q)^2-4qa_1^2$ is not a square
      in the ring of $p$-adic
  integers~$\Z_p$.
  \end{enumerate}
\end{remark}
\begin{remark}
  For an elliptic curve $E$ over a finite field~$k$,
  the rank of the $\Z$-algebra $\End_{\overline{k}}(E)$ of
  $\overline{k}$-endomorphisms is either $2$ or $4$,
  and these cases correspond exactly to the cases $r(E)=1$ and $r(E)=0$.
  
  For abelian surfaces $A$, the $p$-rank $r(A)$ cannot be computed
  from the $\Z$-rank of the endomorphism algebra.
  In fact, for absolutely simple abelian surfaces $A$,
  the ring
  $\mathrm{End}_{\overline{k}}(A)\otimes\Q$
  is always a {\cmfield} of degree $4$,
  while both $r(A)=1$ and $r(A)=2$ occur
  (see also \cite[Thm 3.7(ii)]{gonzalez}).
\end{remark}

\section{Existence of suitable Weil numbers}\label{sec:existence}

Let $p$ be a prime that factors in $K$
as in (2) of Lemma \ref{lem:mainlemma}.
The fact that not all primes over $p$ have the same ramification index
or residue degree implies that the degree-$4$
extension $K/\Q$ is not Galois.
As $K$ has a non-trivial automorphism, complex conjugation, the
normal closure
$L$ of $K$ has Galois group~$D_4$.
We therefore have to restrict to non-Galois quartic number fields $K$
with Galois group~$D_4$.

In the case $e=2$, the prime $p$ ramifies in~$K$, hence divides
its discriminant. Since
explicit CM constructions are feasible only for small fields~$K$,
i.e., fields $K$ of small discriminant,
this means that we can construct the curve $C$ corresponding to $\pi$
only for very small values of~$p$.
For such small values of $p$, not only are the curves
less interesting, especially from a cryptographic point of view,
it also becomes possible to construct them using a more
direct approach such as by enumerating all curves $C$ of genus~$2$
over $\mathbf{F}_p$ and computing the group orders
of their Jacobians.
Therefore, we will focus on the case~$e=1$.
For $e=1$, condition (3) of Lemma \ref{lem:mainlemma} implies~$2|n$,
so that curves are defined only
over fields containing~$\mathbf{F}_{p^2}$. This is the reason
why we construct our curves over $\mathbf{F}_{p^2}$ and not
over~$\F_{p}$, and 
this is why
curves of $p$-rank $1$ over $\F_p$ for large~$p$ cannot
be efficiently constructed using explicit CM constructions.

We have found that all fields with $p$-rank-$1$
Weil $p^2$-numbers are quartic non-Galois \cmfields.
However, not all quartic non-Galois
{\cmfields } have $p$-rank-$1$ Weil $p^2$-numbers,
and we give a complete characterization in
Section~\ref{sec:suffnec}.

For now, we give two lemmas that
put a condition on the {\cmfields } $K$
that is slightly too strong, but is easy
to check and is satisfied by `most' non-Galois quartic \cmfields.
\begin{lemma}\label{lem:alpha}
  Let $K$ be a quartic {\cmfield } and let $p$ be a prime
  that factors in $K$ as
  $p\O_K=\pfactor $.
  Suppose that $\mathfrak{p}_1=\alpha\O_K$ is principal. Then
  $\pi=\alpha\overline{\alpha}^{-1}p$ is a Weil $p^2$-number
  that satisfies the conditions of Lemma \ref{lem:mainlemma}.
\end{lemma}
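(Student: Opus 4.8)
The plan is to verify the three conditions of Lemma \ref{lem:mainlemma} directly for the proposed $\pi = \alpha\overline{\alpha}^{-1}p$, after first checking it is a Weil $p^2$-number. For the Weil number property, the key point is that complex conjugation on $K$ fixes the totally real subfield $K_0$, so for any embedding into $\C$ we have $\overline{\alpha\overline{\alpha}^{-1}p} = \overline{\alpha}\alpha^{-1}p$, whence $\pi\overline{\pi} = p^2$; since $p^2$ is a positive rational, $|\pi|^2 = p^2$ holds for every embedding, and $\pi$ is an algebraic integer because $\alpha\overline{\alpha}^{-1}p = \alpha^2\overline{\alpha}^{-1}\alpha^{-1}p = \alpha^2 p/(\alpha\overline{\alpha}) = \alpha^2 p / p = \alpha^2$ (using $\alpha\overline{\alpha}\O_K = \mathfrak{p}_1\overline{\mathfrak{p}_1} = $ the part of $p\O_K$ of norm $p^2$)—wait, one must be careful here, since $\mathfrak p_1\overline{\mathfrak p_1}$ need not equal $p\O_K$. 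The clean computation is $\pi\O_K = \alpha\O_K \cdot (\overline{\alpha}\O_K)^{-1}\cdot p\O_K = \mathfrak{p}_1\overline{\mathfrak{p}_1}^{-1}\cdot \pfactor = \mathfrak{p}_1^2\mathfrak{p}_2$, which is an integral ideal, so $\pi\in\O_K$.

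Next I would establish condition (1). We are given that $K$ is a quartic {\cmfield}, so the only thing to check is that $K = \Q(\pi)$, i.e. that $\pi$ does not lie in a proper subfield. The subfields of a quartic {\cmfield} are $\Q$, $K_0$, and possibly one or two imaginary quadratic fields. Since $\pi\O_K = \mathfrak{p}_1^2\mathfrak{p}_2$ with $\mathfrak{p}_1\neq\overline{\mathfrak{p}_1}$, the ideal $\pi\O_K$ is not stable under complex conjugation, so $\pi\notin K_0$ and in particular $\pi\notin\Q$. If $F\subset K$ were an imaginary quadratic subfield, then $K/F$ would be quadratic and the factorization $p\O_K = \pfactor$ would have to be induced from a factorization in $F$; but then the exponents and residue degrees of the primes above $p$ in $K$ would be constant on $\Gal$-orbits over $F$, forcing all three primes $\mathfrak{p}_1,\overline{\mathfrak{p}_1},\mathfrak{p}_2$ to pair up consistently, which is impossible since the residue field of $\mathfrak p_2$ is $\F_{p^2}$ while those of $\mathfrak p_1,\overline{\mathfrak p_1}$ are $\F_p$. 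Hence $K=\Q(\pi)$ and condition (1) holds. Conditions (2) and (3) are then immediate: (2) is just the hypothesis $p\O_K = \pfactor$ (the case $e=1$), and (3) is the computation $\pi\O_K = \mathfrak{p}_1^2\mathfrak{p}_2 = \mathfrak{p}_1^n\mathfrak{p}_2^{en/2}$ with $n=2$, $e=1$ already carried out above.

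The main obstacle is the argument that $K=\Q(\pi)$, specifically ruling out that $\pi$ generates only an imaginary quadratic subfield. The cleanest way to handle this is probably to argue via the splitting of $p$: an imaginary quadratic subfield $F$ would force a compatibility between the prime factorizations of $p$ in $F$ and in $K$, and the asymmetry of $p\O_K = \pfactor$ (one prime with residue degree $2$, two with residue degree $1$, none ramified) is incompatible with being pulled back from a field of degree $2$. Alternatively, and perhaps more simply, one notes that $\Q(\pi)$ is a {\cmfield} (it is stable under complex conjugation and $\pi\neq\overline{\pi}$ since their ideals differ), so if it were a proper subfield it would be imaginary quadratic, and then $\Q(\pi^2)=\Q(\alpha^2)$ would be $\Q$ or that quadratic field—but $\alpha^2\O_K = \mathfrak{p}_1^2\overline{\mathfrak{p}_1}\mathfrak{p}_2$ already fails to be conjugation-stable in the right way; the bookkeeping here should be spelled out carefully. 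Everything else is routine manipulation of ideals in $\O_K$.
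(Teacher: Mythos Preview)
Your approach is essentially the paper's: compute $\pi\overline{\pi}=p^2$, then compute $\pi\O_K=\mathfrak{p}_1^2\mathfrak{p}_2$ via $\mathfrak{p}_2=p(\alpha\overline{\alpha})^{-1}\O_K$, and read off conditions (2) and (3). The paper's proof is three lines and dispatches condition~(1) with the phrase ``satisfied by assumption''; you are right to be more careful and actually argue that $\Q(\pi)=K$.

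That said, your detour through hypothetical imaginary quadratic subfields is unnecessary. The factorization $p\O_K=\mathfrak{p}_1\overline{\mathfrak{p}_1}\mathfrak{p}_2$ already forces $K/\Q$ to be non-Galois (the primes over $p$ have different residue degrees, impossible in an abelian extension; this is exactly the observation opening Section~\ref{sec:existence}). A non-Galois quartic field with Galois group $D_4$ has a \emph{unique} quadratic subfield, namely $K_0$. So once you have shown $\pi\notin K_0$ (via $\pi\O_K\neq\overline{\pi}\O_K$, as you do), you are done: $\Q(\pi)=K$ follows immediately. Drop the imaginary-quadratic case analysis and the speculative alternative at the end (where, incidentally, $\alpha^2\O_K=\mathfrak{p}_1^2$, not $\mathfrak{p}_1^2\overline{\mathfrak{p}_1}\mathfrak{p}_2$).
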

\begin{proof}
  The number $\pi$ satisfies $\pi\overline{\pi}=p^2$,
  hence is a Weil $p^2$-number.
  Conditions (1) and (2) of Lemma \ref{lem:mainlemma}
  are satisfied by assumption.
  Moreover, we have $\mathfrak{p}_2
  =p(\mathfrak{p}_1\overline{\mathfrak{p}_1})^{-1}
  =p(\alpha\overline{\alpha})^{-1}\O_K$,
  so that we have $\pi\O_K=\pifactor $,
  i.e., condition (3) is also satisfied.
  \end{proof}
  The condition on $p$ of Lemma \ref{lem:alpha}
  is stronger than the condition
  that there exists a Weil $p^2$-number in $K$ with $e=1$.
  The following lemma gives a necessary and sufficient criterion on $K$
  for the existence of primes $p$ satisfying this stronger condition.

  For a non-Galois quartic {\cmfield } $K$,
  let $L$ be its normal closure over $\Q$
  and let $d$ be the discriminant of the real quadratic subfield
  $K_0$ of~$K$.
  Then we have $K=K_0(\sqrt{r})$ for a totally negative element $r\in K_0$,
  and $s=N_{K_0/\Q}(r)\in\Q$ is not a square, because
  $K$ is non-Galois.
  Let $d^{\mathrm{r}}$ be the discriminant
  of the real quadratic field $K_0^{\mathrm{r}}=\Q(\sqrt{s})$.
  Note that this field is independent of the choice of~$r$.
  Indeed, the element $r$ is well-defined up to squares in $K_0^*$, 
  hence $s$ is well-defined up to squares in~$\Q^*$.

  A \emph{prime discriminant}\index{prime discriminant} is a number
  that is $-4$ or $\pm 8$ or is $\pm p\equiv 1\pmod 4$ for
  an odd prime~$p$.
  The discriminant of a quadratic field can be written
  uniquely as a product
  of distinct prime discriminants
  in which at most one even factor occurs.
  \begin{lemma}\label{lem:alphadensity}
  Let $K$ be a non-Galois quartic \cmfield.
  The following are equivalent
  \begin{enumerate}
  \item there exists a prime $p$ that factors in $K$
  as $p\O_K=\pfactor $
  with $\mathfrak{p}_1$ principal;
  \item the Dirichlet density of
  the set of primes $p$ as in \textup{(1)}
  is $(4 h_K)^{-1}$,
  where $h_K$ is the class number of~$K$;
  \item there is a prime that ramifies in $L/K$;
  \item not all prime discriminants in the discriminant
        factorization of $d^{\mathrm{r}}$
        occur in that of $d$.
  \end{enumerate}
  \end{lemma}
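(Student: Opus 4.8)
The plan is to prove the cycle $(2)\Rightarrow(1)\Rightarrow(3)\Rightarrow(2)$ together with the equivalence $(3)\Leftrightarrow(4)$; here $(2)\Rightarrow(1)$ is immediate since $(4h_K)^{-1}>0$. First I set up notation. Let $L$ be the normal closure of $K$ and $G=\Gal(L/\Q)\cong D_4$; since $K$ (hence $L$) is a {\cmfield}, complex conjugation $c$ is the central involution of $G$, so we may write $K=L^{\langle\tau\rangle}$ for a non-central involution $\tau$, whence $K_0=L^{\langle c,\tau\rangle}$ and the maximal totally real subfield of $L$ is $M=L^{\langle c\rangle}$, a biquadratic field. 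Writing $\sqrt{r}$ for a square root of $r$ inside $K$, letting $\sigma$ be an element of order $4$ and $K'=L^{\langle c\tau\rangle}$ the third quadratic subextension of $L/K_0$ (a conjugate of $K$), a short $D_4$-computation with the elements $\pm\sqrt{r}\cdot\sigma(\sqrt{r})$ shows $\sqrt{s}\in M$, shows $\Q(\sqrt{s})\neq\Q(\sqrt{d})$ (otherwise $K'=K$), and hence identifies $M=\Q(\sqrt{d},\sqrt{s})$, with third quadratic subfield $\Q(\sqrt{ds})$. In particular $L=KM=K(\sqrt{s})$ is a compositum with $K\cap M=K_0$ and $[L:K]=[M:K_0]=2$.

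The arithmetic crux is a description of the relevant primes. Up to finitely many $p$ we may assume $p$ is unramified in $L$; then $p\O_K=\pfactor$ holds exactly when $\mathrm{Frob}_p$ lies in the conjugacy class $\{\tau,c\tau\}$ (the two elements of $G$ whose action on $G/\langle\tau\rangle$ has cycle type $(1)(1)(2)$). For such $p$ one has $\mathrm{ord}(\mathrm{Frob}_p)=2$, so the primes of $L$ above $p$ have residue degree $2$, and counting $\#\{\text{primes of }L\text{ over }\mathfrak{p}_1\}\cdot f\cdot e=[L:K]=2$ with $e=1$ and $f=2/1=2$ shows $\mathfrak{p}_1$ is \emph{inert} in $L/K$, i.e.\ $\mathrm{Frob}_{\mathfrak{p}_1}(L/K)$ is the nontrivial element $\tau$ of $\Gal(L/K)=\langle\tau\rangle$; conversely a case check over the five conjugacy classes of $D_4$ shows that the only primes $\mathfrak{p}$ of $K$ with $\mathrm{Frob}_{\mathfrak{p}}(L/K)\neq1$ are precisely the two degree-$1$ primes lying over such a $p$. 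This gives $(1)\Rightarrow(3)$: if $(3)$ fails, $L/K$ is unramified everywhere, so $L\subseteq H_K$ (Hilbert class field), every $p$ as above is unramified in $H_K$, and $\mathrm{Frob}_{\mathfrak{p}_1}(H_K/K)$ restricts to $\tau\neq1$ on $L$; by class field theory $[\mathfrak{p}_1]\neq1$ in $\cl(K)$, so such $\mathfrak{p}_1$ is never principal.

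For $(3)\Leftrightarrow(4)$ I compare relative discriminants. Since $L/K_0$ is a $V_4$-extension with quadratic subextensions $K,K',M$, the conductor--discriminant formula gives $\mathfrak{d}_{L/K_0}=\mathfrak{d}_{K/K_0}\,\mathfrak{d}_{K'/K_0}\,\mathfrak{d}_{M/K_0}$, while the tower formula gives $\mathfrak{d}_{L/K_0}=N_{K/K_0}(\mathfrak{d}_{L/K})\cdot\mathfrak{d}_{K/K_0}^{2}$. As $K'$ is the image of $K$ under an automorphism of $L$ inducing the nontrivial automorphism of $K_0$, the ideals $\mathfrak{d}_{K/K_0}$ and $\mathfrak{d}_{K'/K_0}$ are conjugate over $\Q$ and have equal norm to $\Q$; eliminating, one obtains $N_{K/\Q}(\mathfrak{d}_{L/K})=N_{K_0/\Q}(\mathfrak{d}_{M/K_0})$. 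Hence $L/K$ is unramified at all finite primes if and only if $M/K_0$ is. For the {\cmfield} extension $L/K$ this means $L\subseteq H_K$, i.e.\ $(3)$ fails; for the totally real extension $M/K_0$ it means, by genus theory, that $\Q(\sqrt{s})$ is contained in the genus field $\Q(\sqrt{d_1},\dots,\sqrt{d_t})$ of $K_0$, where $d=d_1\cdots d_t$ is the prime-discriminant factorization --- equivalently, that every prime discriminant of $d^{\mathrm{r}}=\mathrm{disc}\,\Q(\sqrt{s})$ already occurs in that of $d$, which is exactly the negation of $(4)$. Thus $(3)\Leftrightarrow(4)$.

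Finally $(3)\Rightarrow(2)$. Assuming $(3)$, we have $L\cap H_K=K$, so $H_KL/K$ is abelian with group $\Gal(H_K/K)\times\Gal(L/K)\cong\cl(K)\times\langle\tau\rangle$. By the description above, a rational prime $p$ occurs in $(1)$ exactly when the two degree-$1$ primes $\mathfrak{p}$ of $K$ above it satisfy $\mathrm{Frob}_{\mathfrak{p}}(H_KL/K)=(1,\tau)$: the first coordinate being $1$ encodes ``$\mathfrak{p}$ principal'', the second being $\tau$ encodes ``$p$ has the given factorization type''. By the Chebotarev density theorem such primes $\mathfrak{p}$ have Dirichlet density $1/(2h_K)$ among the primes of $K$; since exactly two of them lie over each $p$ in $(1)$, passing back to rational primes contributes a factor $2$, so the Dirichlet density of the set in $(1)$ is $\tfrac12\cdot\tfrac1{2h_K}=(4h_K)^{-1}$, which is $(2)$. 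I expect this last step to be the main obstacle: it requires choosing precisely the extension $H_KL/K$ and verifying that the two conditions of $(1)$ combine into a single Frobenius condition there --- which works only because $\mathfrak{p}_1$ is inert in $L/K$ and because $(3)$ forces $L\cap H_K=K$ (if $(3)$ fails the conditions $\mathrm{Frob}_{\mathfrak{p}}(L/K)=\tau$ and $\mathrm{Frob}_{\mathfrak{p}}(H_K/K)=1$ are incompatible) --- and then carefully translating between Dirichlet densities of prime sets in $K$ and the rational primes below them. The $D_4$/biquadratic identifications and the discriminant computation are routine but must be handled with care.
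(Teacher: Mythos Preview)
Your proof is correct and follows the same overall cycle $(2)\Rightarrow(1)\Rightarrow(3)\Rightarrow(2)$ together with $(3)\Leftrightarrow(4)$ that the paper uses; the implications $(2)\Rightarrow(1)$ and $(1)\Rightarrow(3)$ are argued essentially identically (principal $\mathfrak{p}_1$ has trivial Artin symbol in $H_K/K$ but nontrivial decomposition in $L/K$, forcing $L\not\subset H_K$).

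The execution differs in two places. For $(3)\Leftrightarrow(4)$, the paper argues directly with inertia groups: a rational prime $q$ ramifies in $L/K$ iff its inertia group in $\Gal(L/\Q)$ contains $\Gal(L/K)$ or its conjugate, which is read off as $q$ ramifying in $K_0K_0^{\mathrm r}/K_0$. Your route via the conductor--discriminant formula and genus theory reaches the same endpoint ($L/K$ unramified $\Leftrightarrow$ $M/K_0$ unramified $\Leftrightarrow$ $\Q(\sqrt{s})$ lies in the narrow genus field of $K_0$) and is perfectly valid, if slightly heavier. For $(3)\Rightarrow(2)$, the paper passes to a Galois closure $M\supset H_K$ over~$\Q$, counts elements of $\Gal(M/\Q)$ with the required restrictions to $L$ and $H_K$, and applies Chebotarev over~$\Q$ directly. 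You instead apply Chebotarev to the abelian extension $H_KL/K$ and then convert the Dirichlet density from primes of $K$ to rational primes; this conversion is legitimate precisely because the primes $\mathfrak{p}_1,\overline{\mathfrak{p}_1}$ in question have residue degree~$1$, so $\sum_{\mathfrak{p}\in S}N\mathfrak{p}^{-s}=2\sum_{p\in T}p^{-s}$ and the pole at $s=1$ has the same strength upstairs and downstairs. Both approaches buy the same result; the paper's avoids the density conversion, yours avoids introducing the auxiliary Galois closure over~$\Q$.
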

  \begin{proof}
  The implication $(2)\Rightarrow(1)$ is trivial.
  Now suppose that (1) holds, so the decomposition
  group of $\mathfrak{p}_1$ in $\Gal(L/\Q)$ is $\Gal(L/K)$
  and the ideal class of $\mathfrak{p}_1$ is trivial.
  By the Artin isomorphism $\cl_K\rightarrow \Gal(H/K)$,
  this implies that the
  decomposition group of $\mathfrak{p}_1$ in $\Gal(H/K)$
  is trivial for the Hilbert class field $H$ of~$K$.
  As the decomposition group of $\mathfrak{p}_1$ in
  $\Gal(L/K)$ is non-trivial, this implies that $L$ is not contained
  in the maximal unramified abelian extension $H$ of~$K$,
  so $L/K$ ramifies at some prime and (3) holds.
  
  For the proof of $(3)\Rightarrow(2)$, we use again that the primes
  $p$ as in (1) are those
  for which there exists a prime in $L$ over $p$
  with decomposition group $\Gal(L/K)$ in $L/\Q$ and
  trivial decomposition group $H/K$.
  Let $M\supset H$ be Galois over $\Q$.
  Since (3) implies $L\cap H=K$, we find
  $\Gal(HL/K)=\Gal(H/K)\times \Gal(L/K)$ and hence
  that exactly $1$ in every
  $8h_K$ elements $\sigma\in \Gal(M/\Q)$ satisfies
  $\langle \sigma_{|L}\rangle=\Gal(L/K)$ and $\sigma_{|H}=1$.
  The conjugation class of $\Gal(L/K)$ in $\Gal(L/\Q)$ has two elements,
  hence the set of all $\sigma$ yielding the appropriate
  factorization is twice as large, i.e.,
  consists of $1$ in every $4h_K$ elements of $\Gal(M/\Q)$.
  By Chebotarev's density theorem~\cite[Theorem 13.4]{neukirch}, this
  implies that the density of primes with this factorization
  is $(4h_K)^{-1}$, which proves~(2).
  
  Now, it remains to prove $(3)\Leftrightarrow(4)$.  
  Let $L_0$ be the compositum of $K_0$ and $K_0^{\mathrm{r}}$ in~$L$.
  A prime $q\in\Z$ ramifies in $L/K$ if and only if its inertia group
  in $\Gal(L/\Q)$ contains $\Gal(L/K)$ or its conjugate.
  This is equivalent to $q$ ramifying in $L_0/K_0$,
  that is, to the prime discriminant in $d^{\mathrm{r}}$ corresponding
  to $q$ not occurring in the prime discriminant factorization of~$d$.
  \end{proof}
\begin{example}\label{ex:firstexample}
  The field $K=\Q[X]/(X^4+12X^2+2)$ does not satisfy
  the conditions of Lemma~\ref{lem:alphadensity}, because
  it has $d=8\cdot 17$ and $d^{\mathrm{r}}=8$.
\end{example}
For `most' non-Galois quartic \cmfields~$K$, the discriminant $d^{\mathrm{r}}$ does not
divide~$d$, in which case the conditions of Lemma~\ref{lem:alphadensity}
hold.
This means that if we try to find our Weil numbers
by taking random primes $p$ and checking if there exists
a Weil $p^2$-number $\pi\in K$ as in Lemma~\ref{lem:mainlemma},
then we have a probability $(4h_K)^{-1}$ of success.

\section{The algorithms}\label{sec:algorithms}

The discussion in Section~\ref{sec:existence} leads to the following
algorithm.

\toplinealgorithm

\begin{algorithm}
\label{algo1}
\hfill \\
\textbf{Input:} A non-Galois {\cmfield } $K$ of degree $4$ and a positive integer~$\ell$.\\
\textbf{Output:} A prime $p$ of $\ell$ bits and a Weil $p^2$-number $\pi$ corresponding
  to the Jacobian $\jacC$ of a curve of genus~$2$ over $\F_{p^2}$ such that
  $\# \jacC(\F_{p^2})$ is prime.
\middlelinealgorithm
\begin{enumerate}
\item\label{algorithm1step1} Take a random positive integer
       $p$ of $\ell$ bits.
\item If $p$ is prime, continue. Otherwise, go to Step~\ref{algorithm1step1}.
\item If $p\mathcal{O}_K$ factors as $\pfactor $,
       continue.
       Otherwise, go to Step~\ref{algorithm1step1}.
\item If $\pifactor $ is principal, let $\pi_0$ be a generator
and let $v=\pi_0\overline{\pi_0}p^{-2}\in\mathcal{O}_{K_0}^*$.
Otherwise, go to Step~\ref{algorithm1step1}.
\item If we have $v=N_{K/K_0}(w)$ for some $w\in\O_K^*$,
then put $\pi=w^{-1}\pi_0$.
      Otherwise, go to Step~\ref{algorithm1step1}.
\item  If $N(u\pi-1)$ is prime for some $u\in \{\pm 1\}$,
       then replace $\pi$ by~$u\pi$.
       Otherwise, go to Step~\ref{algorithm1step1}.
\item \Return $p,\pi$.
\end{enumerate}
\bottomlinealgorithm
%

\end{algorithm}

Note that the group order $N(\pi-1)$
of $\jacC$ has about $4\ell$ bits
since we have $N(\pi-1)\approx N(\pi)=p^4$.

\begin{theorem}\label{thm:algorithm1}
  If Algorithm~\ref{algo1} terminates, then the output is correct.
  
  Fix the input field $K$ and assume that it satisfies
  the conditions
   of Lemma~\ref{lem:alphadensity}. If $K$ has no prime ideal
   of norm $2$, and no prime above $2$ is ramified in $K/K_0$,
   then
   the heuristic expected runtime of the
  algorithm is polynomial in $\ell$.
\end{theorem}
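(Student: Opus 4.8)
The plan is to split the theorem into its two assertions and handle correctness first, then the heuristic runtime analysis.

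\textbf{Correctness.} Suppose the algorithm reaches Step~7 and returns $(p,\pi)$. First I would verify that $\pi$ is a Weil $p^2$-number satisfying the conditions of Lemma~\ref{lem:mainlemma}. Steps~1--3 guarantee that $K$ is a non-Galois quartic {\cmfield} (input) in which $p$ factors as $\pfactor$, so conditions (1) and (2) of Lemma~\ref{lem:mainlemma} hold with $e=1$. In Step~4 we have a generator $\pi_0$ of $\mathfrak{p}_1^2\mathfrak{p}_2$, so $\pi_0\O_K = \pifactor$ and hence $\pi_0\overline{\pi_0}\O_K = (\mathfrak{p}_1\overline{\mathfrak{p}_1}\mathfrak{p}_2)^2 = p^2\O_K$; thus $v = \pi_0\overline{\pi_0}p^{-2}$ is a unit of $\O_{K_0}$, and since $\pi_0\overline{\pi_0}$ and $p^2$ are both totally positive, $v$ is a totally positive unit. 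In Step~5, writing $v = N_{K/K_0}(w) = w\overline{w}$ for a unit $w$ and setting $\pi = w^{-1}\pi_0$ gives $\pi\overline{\pi} = (w\overline{w})^{-1}\pi_0\overline{\pi_0} = v^{-1}\cdot vp^2 = p^2$, so $\pi$ is a Weil $p^2$-number, and $\pi\O_K = \pi_0\O_K = \pifactor$ since $w$ is a unit, which is condition (3). Replacing $\pi$ by $u\pi$ with $u\in\{\pm1\}$ in Step~6 changes nothing, since $\pm 1$ are units of $\O_K$ and $(\pm\pi)\overline{(\pm\pi)} = p^2$. By Lemma~\ref{lem:mainlemma} and Honda--Tate, $\pi$ is the Frobenius of a simple abelian surface $A/\F_{p^2}$ of $p$-rank~$1$, which by the Corollary is isogenous to the Jacobian $\jacC$ of a genus-$2$ curve $C$ over $\F_{p^2}$; finally $\#\jacC(\F_{p^2}) = N_{K/\Q}(\pi-1)$ (the characteristic polynomial of Frobenius evaluated at $1$), which Step~6 guarantees is prime. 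Hence the output is correct.

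\textbf{Heuristic runtime.} The body of the loop runs in time polynomial in $\ell$: primality testing (Step~2), factoring $p$ in the fixed field $K$ (Step~3), testing principality and finding a generator in the fixed class group of $K$ and $K_0$ (Steps~4--5 — these involve only the fixed field $K$, so the unit and class groups are precomputed constants), and computing and testing $N(\pi-1)$ (Step~6). So it remains to bound the expected number of iterations. I would argue as follows. By Lemma~\ref{lem:alphadensity}, since $K$ satisfies its conditions, the Dirichlet density of primes $p$ passing Step~3 \emph{and} Step~4 with $\mathfrak{p}_1$ principal is $(4h_K)^{-1} > 0$; I would note that passing Step~3 with $\mathfrak{p}_1^2\mathfrak{p}_2$ principal is essentially the same condition up to the contribution of the (finitely many, bounded-index) subgroup generated by the relevant ideal classes, so a positive proportion $c = c(K) > 0$ of $\ell$-bit primes reach Step~5. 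For Step~5 itself: the obstruction $v \in \O_{K_0}^{*,+}/N_{K/K_0}(\O_K^*)$ lies in a finite group depending only on $K$ (a subgroup of $\O_{K_0}^{*,+}/(\O_{K_0}^*)^2$, of order dividing $2^{[K_0:\Q]} = 4$), and one expects (heuristically, treating $v$ as equidistributed) a positive constant proportion of the primes reaching Step~5 to pass it. Finally, Step~6: $N(\pi-1)$ is an integer of size $\approx p^4 \approx 2^{4\ell}$, so by the heuristic that such values are prime with probability $\asymp 1/\log(p^4) \asymp 1/\ell$ (assuming no fixed small prime always divides $N(\pi-1)$ — this is exactly why the theorem hypothesizes no prime ideal of norm $2$ and no ramified prime above $2$ in $K/K_0$, which would force $2 \mid N(\pi-1)$ always), a fraction $\asymp 1/\ell$ of the $\pi$ reaching Step~6 succeed. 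Multiplying, the success probability per $\ell$-bit prime is $\asymp 1/\ell$, and since the density of $\ell$-bit primes among $\ell$-bit integers is $\asymp 1/\ell$, each loop iteration succeeds with probability $\asymp 1/\ell^2$ up to the constant $c(K)$; hence the expected number of iterations is $O(\ell^2)$, polynomial in $\ell$, and the total expected runtime is polynomial in $\ell$.

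\textbf{Main obstacle.} The genuinely delicate point is justifying the claimed success probabilities at Steps~5 and~6, which is why the result is only a \emph{heuristic} estimate. For Step~6 one must invoke a quantitative prime-density heuristic for the values $N(\pi-1)$ (a polynomial-type prime conjecture), and argue that the congruence obstructions are captured precisely by the two stated hypotheses on primes above $2$ — I would make explicit that under these hypotheses $N(\pi-1)$ has no systematic small factor, so the naive $1/\ell$ heuristic is reasonable. For Step~5 the subtlety is whether the norm-residue symbol that decides solvability of $v = N_{K/K_0}(w)$ is equidistributed as $p$ varies over the primes already constrained by Step~3; I would handle this by remarking that, after fixing the ideal class, $v$ ranges over a coset in the finite group $\O_{K_0}^{*,+}/N_{K/K_0}(\O_K^*)$ and appeal to a Chebotarev-type equidistribution of the associated Frobenius data in a suitable ray class field, again as a heuristic. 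Everything else — the per-iteration polynomial complexity and the Lemma~\ref{lem:alphadensity} density input — is unconditional.
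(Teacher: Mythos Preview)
Your correctness argument matches the paper's. The runtime argument is broadly right but diverges from the paper at Steps~4--5. You treat the solvability of $v=N_{K/K_0}(w)$ as a separate heuristic (equidistribution of $v$ in $\O_{K_0}^{*,+}/N_{K/K_0}(\O_K^*)$, to be justified by a Chebotarev-type argument). The paper avoids this entirely: it restricts to the subset of primes where $\mathfrak{p}_1=(\alpha)$ is itself principal---which is exactly the set of density $(4h_K)^{-1}$ supplied by Lemma~\ref{lem:alphadensity}---and observes that then $\pi=\pm\alpha\overline{\alpha}^{-1}p$ is an explicit generator of $\mathfrak{p}_1^2\mathfrak{p}_2$ with $\pi\overline{\pi}=p^2$, so any generator $\pi_0=w\pi$ found in Step~4 gives $v=\pi_0\overline{\pi_0}/p^2=w\overline{w}$ and Step~5 passes \emph{unconditionally}. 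This buys one fewer heuristic assumption. For Step~6 the paper is also more concrete than your bare ``prime with probability $\asymp 1/\ell$'': treating $\alpha$ as random in $\O_K$, it bounds for each prime $q$ the probability $P_q$ that $q\nmid N(\pi-1)$ via the map $x\mapsto x\overline{x}^{-1}N(x)$ on $(\O_K/q\O_K)^*$, uses the two hypotheses on primes above $2$ precisely to get $P_2>0$, shows $P_q\geq 1-17/q$ for $q\geq 19$, and applies Mertens' theorem to the product. Your version is not wrong as a heuristic sketch, but the paper's route makes the role of each hypothesis explicit and eliminates the extra heuristic at Step~5.
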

\begin{proof}
  The output $\pi$ is a Weil $p^2$-number
  satisfying the conditions of Lemma~\ref{lem:mainlemma},
  and the corresponding abelian surface $A$ has $\#A(\F_{p^2})=N(\pi-1)$
  rational points, which
  proves that the output is correct.

  All numbers encountered have logarithmic absolute values and heights
  that are bounded linearly in $\ell$, while the field $K$ is fixed.
  This shows that, using the algorithms of \cite{cohen}, 
  all steps, including the primality and principality tests,
  as well as finding a generator of $\mathfrak{p}_1^2\mathfrak{p}_2$
  and trying to extract a square root of $v$, take time
  polynomial in~$\ell$.
  It therefore suffices to prove that the
  heuristic expected number of iterations
  of Step~\ref{algorithm1step1} is quadratic in~$\ell$.

  The number $p$
  has a heuristic probability $1/(\ell\log 2)$ to be prime
  by the Prime Number Theorem.\index{Prime Number Theorem}
  This shows that for each time Step~3 is reached,
  one expects to run Step~1 about $\ell\log 2$ times.

  We will `prove' that the heuristic bound holds
  even if we restrict in Step~3 to $\mathfrak{p}_1$
  principal and generated by $\alpha$.
  By Lemma~\ref{lem:alphadensity},
  the density of the set of primes $p$
  that factor in the appropriate way and
  for which $\alpha$ exists is $(4h_K)^{-1}$,
  so we arrive at Step~4 (with $\mathfrak{p}_1=(\alpha)$)
  with probability~$(4h_K)^{-1}$.
  
  Note that $\pi=-\alpha\overline{\alpha}^{-1}p$
  is a generator of $\mathfrak{p}_1^2\mathfrak{p}_2$,
  so we pass Step~4 with $\pi_0=w\pi$ for some unit
  $w\in\O_K^*$.
  
  Note that we have $p^2=\pi\overline{\pi}$,
  hence $v=w\overline{w}$, proving that we pass Step~5 as well.
  
  We now only need to show that $N(\pi-1)$
  is prime with sufficiently high probability.
  Treating $\alpha$ as a random element of~$\O=\O_K$,
  we wish to know the probability that $X=N(\pi-1)$
  is prime, i.e., not divisible by any prime $q<X$.
  For each such $q$, we consider the homomorphism
  $$\varphi:(\O/q\O)^*\rightarrow (\O/q\O)^*:
    x \mapsto x\overline{x}^{-1}N(x),$$
  which sends $(\alpha\bmod q)$ to $(-\pi\bmod q)$.
  Now we have $q|N(\pi-1)$ if and only if
  $\pi\equiv 1\pmod{\mathfrak{q}}$
  for some prime $\mathfrak{q}|q$ of~$K$.
  Let $\varphi_{\mathfrak{q}}$ be the composition
  of $\varphi$ with the natural map
  $(\mathcal{O}/q\mathcal{O})^*\rightarrow (\mathcal{O}/\mathfrak{q})^*$.
  Note that we have $\pi\equiv 1\pmod{\mathfrak{q}}$
  if and only if $\alpha$ is an element of
  $\varphi_{\mathfrak{q}}^{-1}(-1)$.
  If we define
  $$P_q=1-\frac{\#\bigcup_{\mathfrak{q}|q} \varphi_{\mathfrak{q}}^{-1}(-1)}
    {\#(\O/q\O)^*},$$
  then the heuristic probability of $q\nmid N(\pi-1)$ equals~$P_q$.
  As the homomorphism $\varphi$ sends $1$ to~$1$,
  we find $P_q>0$ for all $q>2$.

  For $q=2$, note that we have $N(x)=1$. Then for all $\mathfrak{q}\mid q$
  with $\overline{\mathfrak{q}}=\mathfrak{q}$, take
  $(x\bmod\mathfrak{q})\in (\mathcal{O}/\mathfrak{q})^*$ with $x\not=\overline{x}$,
  which is possible, because $2$ is unramified in $K/K_0$.
  For $\mathfrak{q}\mid q$ with $\overline{\mathfrak{q}}\not=\mathfrak{q}$,
  take exactly one of $(x\bmod\mathfrak{q})$ and $(x\bmod\overline{\mathfrak{q}})$
  equal to $1$,
  which is possible because $\mathfrak{q}$ has norm $\geq 4$.
  Then $x\overline{x}^{-1}\not\equiv 1\equiv -1\pmod{\mathfrak{q}}$ for all $\mathfrak{q}\mid q$,
  which proves $P_2>0$.

We use the lower bound $P_q>0$
  for $q\leq 17$.

  For $q\geq 19$, note that we have
\[P_q \geq 1-\sum_{\mathfrak{q}\mid q}\frac{\# \ker \varphi_{\mathfrak{q}}}{\#(\O/q\O)^*}
    \geq 1-\sum_{\mathfrak{q}\mid q}\frac{1}{\#\im \varphi_{\mathfrak{q}}}\]
  and that
$\smash{ \im \varphi_{\mathfrak{q}}\supset \varphi_{\mathfrak{q}}(\mathbf{F}_q^*)=(\mathbf{F}_q^*)^4}$
  has order $\geq (q-1)/4$, hence we have
  \[ P_q \geq 1-4\frac{4}{q-1}>1-\frac{17}{q}.\]

  We thus find heuristically that $N(\pi-1)$ is prime with probability
  at least a positive constant times
  $$Y=\prod_{\substack{19\leq q<X\\ \mathrm{prime}}} \left(1-\frac{17}{q}\right).$$
  We find $\log(Y)>-\sum_{q}\frac{17}{q}$,
  and the right hand side, by Mertens' theorem
  \cite[Thm.~427 in~22.7]{hw},
  is $17\log\log X$ plus something that
  converges to a constant if $X$ tends to infinity.
  In particular, we find that
  $1/Y$ is at most polynomial in $\log X\approx 4\ell$, which is what
  we needed to prove.  
\end{proof}
\begin{remark}
For more detailed heuristics on prime order Jacobians
of curves of genus~$2$
than what is in the proof of Theorem~\ref{thm:algorithm1},
see \cite[\S5.2.2]{wengthesis}.
\end{remark}
\begin{remark}
The conditions of Lemma~\ref{lem:alphadensity} are
sufficient in Theorem~\ref{thm:algorithm1} and,
as we said before, they hold for `most' non-Galois quartic \cmfields.
They are however not necessary, and
we give strictly weaker conditions
in Section~\ref{sec:suffnec}.
\end{remark}
The following lemma shows that the 
conditions on the decomposition of $2$ in $K$
are necessary in Theorem~\ref{thm:algorithm1},
and that these conditions
are not specific to $p$-rank~$1$,
or even to abelian surfaces.
These conditions vanish however if one allows
the group order to be `almost prime'\index{almost prime} in
the sense that it is a prime times
a `small' (say $\leq 16$) positive integer.
\begin{lemma}
  Let $\pi$ be the Frobenius endomorphism of an abelian variety $A$
  over a finite field $k$ of odd characteristic,
  and let $K=\Q(\pi)$.
  If one of the following conditions holds, then
  the order of $A(k)$ is even.
  \begin{enumerate}
  \item $K$ has a prime ideal $\mathfrak{q}$ of norm $2$,
  \item $K$ is totally real, or
  \item $K$ is a {\cmfield } with totally real subfield $K_0$
  and $K$ has a prime ideal $\mathfrak{q}|2$
  that is ramified in~$K/K_0$.
  \end{enumerate}
\end{lemma}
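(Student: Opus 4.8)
The plan is to show in each case that $\#A(k) = N(\pi-1)$ is divisible by $2$ by exhibiting a prime $\mathfrak{q}\mid 2$ of $K$ with $\pi\equiv 1\pmod{\mathfrak{q}}$, or more directly by working with the factorization of $\pi-1$. Recall that $\#A(k)=\prod(\pi_i-1)$ over the conjugates, which equals $N_{K/\Q}(\pi-1)$ up to the power $m$ coming from Honda--Tate; in all cases it suffices to prove $2\mid N_{K/\Q}(\pi-1)$, i.e.\ that some prime $\mathfrak{q}$ above $2$ divides $\pi-1$, equivalently $\pi\equiv 1\pmod{\mathfrak{q}}$, equivalently the image of $\pi$ in the residue field $\O_K/\mathfrak{q}$ is $1$.

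For case (1): if $\mathfrak{q}$ has norm $2$, then $\O_K/\mathfrak{q}=\F_2$, so the image of $\pi$ is automatically $0$ or $1$. If it is $1$ we are done. If it is $0$, then $\mathfrak{q}\mid\pi$, but then $\mathfrak{q}\mid\pi\overline\pi=q$, which is odd since $k$ has odd characteristic---a contradiction. So the image is $1$ and $2\mid N(\pi-1)$.

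For case (2): if $K$ is totally real then complex conjugation is trivial, so $\pi=\overline\pi$ and $\pi^2=q$. Thus $\pi^2\equiv q\equiv 1\pmod 2$ in $\O_K$, so for any prime $\mathfrak{q}\mid 2$ the image $\overline\pi$ of $\pi$ in $\O_K/\mathfrak{q}$ satisfies $\overline\pi^2=1$, and since $\O_K/\mathfrak{q}$ has characteristic $2$ this forces $\overline\pi=1$ (Frobenius is injective, or $(\overline\pi-1)^2=\overline\pi^2-1=0$ with the residue field reduced). Hence $\mathfrak{q}\mid\pi-1$ and $2\mid N(\pi-1)$. For case (3): $K/K_0$ has a ramified prime $\mathfrak{q}\mid 2$ with $\mathfrak{q}^2\mid \mathfrak{q}_0\O_K$ for $\mathfrak{q}_0=\mathfrak{q}\cap\O_{K_0}$; since $\mathfrak{q}=\overline{\mathfrak{q}}$, complex conjugation acts trivially on the residue field $\O_K/\mathfrak{q}=\O_{K_0}/\mathfrak{q}_0$, so again $\overline\pi=\overline{\overline\pi}$ in this residue field, giving $\overline\pi^2=\overline\pi\cdot\overline{\overline\pi}=\overline{q}=1$ (as $q$ is odd), and as before $\overline\pi=1$, so $\mathfrak{q}\mid\pi-1$ and $2\mid N(\pi-1)$.

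The only mild subtlety, which is the ``main obstacle'' in an otherwise routine argument, is case (3): one must check that ramification in $K/K_0$ really does force complex conjugation to act trivially on $\O_K/\mathfrak{q}$. This follows because the decomposition group of $\mathfrak{q}$ in $\Gal(K/K_0)$ is all of $\Gal(K/K_0)$ (as $\mathfrak{q}$ is the unique prime over $\mathfrak{q}_0$), and the inertia group, which also equals $\Gal(K/K_0)$ by full ramification, maps onto the quotient decomposition/inertia, so the residue extension is trivial and conjugation fixes $\O_K/\mathfrak{q}$ pointwise. I would then note that in all three cases one in fact gets $\mathfrak{q}\mid\pi-1$ for a specific prime $\mathfrak{q}\mid 2$, so the factor of $2$ is genuine, completing the proof.
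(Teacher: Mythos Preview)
Your proof is correct and follows essentially the same route as the paper. Both arguments show $\mathfrak{q}\mid \pi-1$ for some prime $\mathfrak{q}\mid 2$: case~(1) is identical, and in cases~(2)--(3) both reduce to $\pi\equiv\overline{\pi}\pmod{\mathfrak{q}}$, hence $\pi^2\equiv q\equiv 1$, and then conclude $\pi\equiv 1$---the paper phrases this last step via the odd order of $(\O_K/\mathfrak{q})^*$, while you use $(\pi-1)^2=0$ in characteristic~$2$, which is the same fact. Your discussion of the inertia group in case~(3) spells out what the paper leaves implicit in the phrase ``complex conjugation is trivial on $(\O/\mathfrak{q})^*$.''
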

\begin{proof}If $\mathfrak{q}$ has norm $2$,
then we have $\pi\not\equiv 0\pmod{\mathfrak{q}}$,
hence $\pi-1\equiv 0\pmod{\mathfrak{q}}$, which implies
$2|N(\pi-1)$.

In the other two cases,
complex conjugation is trivial
on the group $(\O/\mathfrak{q})^*$ of odd order.
Note that $\pi\overline{\pi}\in\Q$ implies
that $\pi^2=\pi\overline{\pi}$ is trivial in that group,
hence so is $\pi$. We see again that
$\pi-1\equiv 0\pmod{\mathfrak{q}}$
implies $2|N(\pi-1)$.
\end{proof}

Our second algorithm is a modification of Algorithm~\ref{algo1}
in which we start with an element
$\alpha\in\mathcal{O}_K$, instead of with a prime $p$,
and check if $p=N(\alpha)$ is
a prime that decomposes in the appropriate manner.
We use Algorithm~\ref{algo2} as a stepping stone
towards Algorithm~\ref{algo3},
which allows one to prescribe the embedding degree of the output
by imposing congruence conditions on~$\alpha$.

\noindent\underline{\hspace{\textwidth}}

\begin{algorithm}
\label{algo2}
\hfill\\
\textbf{Input:} A non-Galois {\cmfield } $K$ of degree $4$ and a positive integer~$\ell$.\\
\textbf{Output:} A prime $p$ of $\ell$ bits and a Weil $p^2$-number
corresponding to the Jacobian $\jacC$ of a curve $C$ of genus~$2$ over
$\F_{p^2}$ such that $\jacC$ has
$p$-rank~$1$
and a prime number of $\F_{p^2}$-rational points.

\noindent\underline{\hspace{\textwidth}}
\begin{enumerate}
\item \label{algorithm2step1}Take a random element $\alpha$ of
       $\mathcal{O}_K$ of which
        the norm $N(\alpha)$ has $\ell$ bits.
\item If $p=N(\alpha)$ is prime in~$\Z$, continue. Otherwise, go to
 Step~\ref{algorithm2step1}.
\item If the prime $\beta=p\alpha^{-1}\overline{\alpha}^{-1}$ of $\mathcal{O}_{K_0}$
       remains prime in~$\mathcal{O}_K$, then
       let $\pi=\alpha^2\beta$.
       Otherwise, go to Step~\ref{algorithm2step1}.
\item If $N(u\pi-1)$ is prime for some $u\in \{\pm 1\}$,
       then replace $\pi$ by~$u\pi$.
       Otherwise, go to Step~\ref{algorithm2step1}.
\item \Return $p,\pi$.
\end{enumerate}
\vspace{-\baselineskip}

\noindent\underline{\hspace{\textwidth}}
\end{algorithm}
\begin{theorem}\label{thm:algorithm2}
  If Algorithm~\ref{algo2} terminates, then the output is correct.
  
  Fix the input field $K$ and assume that it satisfies
  the conditions
   of Lemma~\ref{lem:alphadensity}. If $K$ has no prime ideal
   of norm $2$, and no prime above $2$ is ramified in $K/K_0$,
   then
   the heuristic expected runtime of the
  algorithm is polynomial in $\ell$.
\end{theorem}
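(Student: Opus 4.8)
The plan is to run the argument in parallel with the proof of Theorem~\ref{thm:algorithm1}, isolating the two new points: the shape of the factorisation produced in Step~3 (needed for correctness), and the heuristic count of elements~$\alpha$ reaching Step~4. For correctness, the key identity is
\[\pi=\alpha^2\beta=\alpha^2\cdot\frac{p}{\alpha\overline{\alpha}}=\alpha\overline{\alpha}^{-1}N(\alpha)=\varphi(\alpha),\]
where $\varphi(x)=x\overline{x}^{-1}N(x)$ is precisely the homomorphism from the proof of Theorem~\ref{thm:algorithm1}; in particular $\pi\overline{\pi}=p^2$, so the returned number $u\pi$ is a Weil $p^2$-number. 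Set $\mathfrak{q}_0=\alpha\overline{\alpha}\O_{K_0}$; Step~2 gives $N_{K_0/\Q}(\mathfrak{q}_0)=N(\alpha)=p$, so $\mathfrak{q}_0$ is a prime of $K_0$ of residue degree~$1$, and since $\mathfrak{p}_1\overline{\mathfrak{p}_1}=\mathfrak{q}_0\O_K$ for $\mathfrak{p}_1=\alpha\O_K$, the prime $\mathfrak{q}_0$ is not inert in $K/K_0$. I would then treat the two remaining behaviours of $\mathfrak{q}_0$ in $K/K_0$. If $\mathfrak{q}_0$ ramifies, then $\alpha^2\O_K=\mathfrak{p}_1^2=\mathfrak{q}_0\O_K$; Step~3 forces $p$ to split in $K_0$ as $\mathfrak{q}_0\mathfrak{q}_0'$ with $\beta\O_{K_0}=\mathfrak{q}_0'$, whence $\pi\O_K=\mathfrak{q}_0\mathfrak{q}_0'\O_K=p\O_K$ and $\pi/p$ is a unit of $\O_K$ all of whose complex conjugates have absolute value~$1$; by Kronecker's theorem it is a root of unity, and a non-Galois quartic {\cmfield} (being non-cyclotomic and having a totally real quadratic subfield) has no roots of unity besides $\pm1$, so $\pi=\pm p$ and $N(\pi-1)=(\pm p-1)^4$ is a proper fourth power, contradicting that Step~4 was passed. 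Hence whenever the algorithm returns $(p,\pi)$ the prime $\mathfrak{q}_0$ splits in $K/K_0$ and $p\O_K=\mathfrak{p}_1\overline{\mathfrak{p}_1}\mathfrak{p}_2$ with $\mathfrak{p}_1=\alpha\O_K$ of norm~$p$ and $\mathfrak{p}_2=\beta\O_K$ of norm~$p^2$; this is condition~(2) of Lemma~\ref{lem:mainlemma} with $e=1$, the equality $\pi\O_K=\mathfrak{p}_1^2\mathfrak{p}_2$ is condition~(3) with $n=2$, and $\mathfrak{p}_1\neq\overline{\mathfrak{p}_1}$ gives $\pi\neq\overline{\pi}$, hence $\Q(\pi)=K$ and condition~(1). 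By Lemma~\ref{lem:mainlemma} and its corollary, $u\pi$ is then the Frobenius of the Jacobian of a genus-$2$ curve over $\F_{p^2}$ of $p$-rank~$1$ whose group of $\F_{p^2}$-rational points has the prime order $N(u\pi-1)$, proving the first statement.

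For the running time, as in the proof of Theorem~\ref{thm:algorithm1} all numbers occurring have logarithmic absolute value and height linear in~$\ell$ while $K$ is fixed, so by~\cite{cohen} one pass through Steps~1--4 --- including the primality tests and the factorisations of $p$ in $K_0$ and in $K$ --- takes time polynomial in~$\ell$, and it suffices to bound the heuristic expected number of iterations. The primes $p$ that pass Steps~2 and~3 with $\mathfrak{p}_1=\alpha\O_K$ principal are, up to a density-zero set, exactly those of Lemma~\ref{lem:alphadensity}(1), of Dirichlet density $(4h_K)^{-1}$ by part~(2) of that lemma --- here the hypotheses on~$K$ enter. Combined with the Prime Number Theorem and the heuristic that $N(\alpha)$ is as likely to be prime as a random integer of its size, a single iteration reaches Step~4 with heuristic probability at least a positive constant over~$\ell$. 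Finally, the returned number equals $\pm\varphi(\alpha)$ with the sign free to choose, so the heuristic lower bound on the probability that $N(\pi-1)$ is prime at Step~4 is obtained verbatim as in the proof of Theorem~\ref{thm:algorithm1}: $P_q>0$ for every prime $q\leq17$ --- for $q=2$ using precisely that $\O_K$ has no ideal of norm~$2$ and no prime above~$2$ is ramified in $K/K_0$ --- while $P_q\geq 1-\tfrac{16}{q-1}>1-\tfrac{17}{q}$ for $q\geq19$, so that $N(\pi-1)$ is heuristically prime with probability at least a positive constant times $\prod_{19\le q<X}(1-17/q)$ with $X\approx p^4$, the reciprocal of which is polynomial in $\log X\approx 4\ell$ by Mertens' theorem. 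Multiplying the three heuristic probabilities yields the claimed polynomial expected running time.

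The step I expect to be the real obstacle is the correctness argument: Step~3 alone does not force the factorisation to have $e=1$, and one must exclude the possibility that $\mathfrak{q}_0$ ramifies in $K/K_0$ by the detour above --- deducing $\pi\O_K=p\O_K$, invoking Kronecker's theorem, and using that $K$ is non-Galois --- with the primality test of Step~4 delivering the contradiction. Everything else transcribes the proof of Theorem~\ref{thm:algorithm1}: the density input is Lemma~\ref{lem:alphadensity} together with the Prime Number Theorem, and the primality heuristic for $N(\pi-1)$ is the computation with the homomorphism $\varphi$ followed by Mertens' theorem.
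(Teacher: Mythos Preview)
Your proof is correct and follows the paper's approach: correctness via Lemma~\ref{lem:alpha}, and the heuristic runtime by combining Lemma~\ref{lem:alphadensity} with the $P_q$-estimate and Mertens' theorem from the proof of Theorem~\ref{thm:algorithm1}. The paper's own proof is much terser --- it simply invokes Lemma~\ref{lem:alpha} for correctness and refers back to Theorem~\ref{thm:algorithm1} for the runtime --- and in particular it does not explicitly address the edge case $\mathfrak{p}_1=\overline{\mathfrak{p}_1}$ that Step~3 alone does not exclude; your Kronecker-theorem detour (forcing $\pi=\pm p$ and hence a non-prime $N(\pi-1)$ at Step~4) is a correct and careful addition that the paper leaves implicit.
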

\begin{proof}
  By Lemma~\ref{lem:alpha}, the output $\pi$ is a Weil $p^2$-number
  satisfying the conditions of Lemma~\ref{lem:mainlemma},
  and the corresponding abelian surface $A$ has $\#A(\F_{p^2})=N(\pi-1)$
  rational points, which
  proves that the output is correct.

  Lemma~\ref{lem:alphadensity} shows that among the elements $\alpha$
  of $\mathcal{O}_K$ of prime norm, at least about
  $1$ in every $4h_K$ has the appropriate factorization,
  so if we treat $N(\alpha)$ and $N(\pi-1)$ as random integers
  as we did in the proof of Theorem~\ref{thm:algorithm1}, then
  we find again that the heuristic expected runtime is polynomial in~$\ell$.
\end{proof}
\begin{remark}
  Actually, the heuristic probability of passing from Step~3 to Step~4
  in Algorithm~\ref{algo2}
  is $1/2$ instead of only $(4h_K)^{-1}$
  as can be seen by applying Chebotarev's density theorem
  to the quadratic extension $LH/H$ from the proof of
  Lemma~\ref{lem:alphadensity}.
\end{remark}

Algorithm~\ref{algo3} constructs $p$-rank-$1$ curves with prescribed embedding
degree by imposing congruence conditions on $\alpha$
in a way that is similar to what is done in the algorithm
of Freeman, Stevenhagen, and Streng~\cite{freeman-stevenhagen-streng}.

\noindent\underline{\hspace{\textwidth}}

\begin{algorithm}\label{algo3}
\hfill\\
\textbf{Input:} A non-Galois {\cmfield } $K$ of degree~$4$, a positive integer $\kappa$
and a prime number $r\equiv 1 \pmod {2\kappa}$ that splits completely in~$K$.\\
\textbf{Output:} A prime $p$ and a Weil $p^2$-number $\pi$ corresponding
to the Jacobian $\jacC$ of a curve $C$ of genus~$2$ over $\F_{p^2}$ that has
$p$-rank~$1$ and embedding degree $\kappa$ with respect to a subgroup of order~$r$.

\noindent\underline{\hspace{\textwidth}}
\begin{enumerate}
\item \label{algorithm3step1}Let $\mathfrak{r}$ be a prime of $K$ dividing $r$,        let
       $\mathfrak{s}=r\mathfrak{r}^{-1}\overline{\mathfrak{r}}^{-1}$
       and compute a basis $b$ of~$\O_K$.
\item \label{algorithm3step2}Take a random element $x$ of $\F_r^*$
       and a primitive $2\kappa$-th root of unity $\zeta\in\F_r^*$.
\item \label{algorithm3step3}Take the `small'
       $\alpha\in\mathcal{O}_K$ such that
       $\alpha\bmod \mathfrak{r}=x$, $\alpha\bmod \overline{\mathfrak{r}}=x\zeta$
       and $\alpha\bmod \mathfrak{s}=x^{-1}$.
       Here `small' means that the coordinates
       with respect to the basis $b$ are $\leq r/2$,
       and $x^{-1}$ is interpreted with respect to the natural
       inclusion of $\F_r^*$ into $\mathcal{O}_K / \mathfrak{s}$. 
\item \label{algorithm3step4}If $p=N_{K/\Q}(\alpha)$ is prime in $\Z$,
continue. Otherwise, go to Step~\ref{algorithm3step2}.
\item \label{algorithm3step5}If
       the prime $\beta=p\alpha^{-1}\overline{\alpha}^{-1}$ of
       $\mathcal{O}_{K_0}$ remains prime in $\mathcal{O}_K$,
       let $\pi=\alpha^2\beta$.
       Otherwise, go to Step~\ref{algorithm3step2}.
\item \Return $p,\pi$.
\end{enumerate}
\vspace{-\baselineskip}

\noindent\underline{\hspace{\textwidth}}
\end{algorithm}

\begin{theorem}\label{thm:algorithm3}
  If Algorithm~\ref{algo3} terminates, then the output is correct.
  If the input field $K$ is fixed and satisfies the conditions of
  Lemma~\ref{lem:alphadensity},
  then the heuristic expected runtime of the
  algorithm is polynomial in $r$.
\end{theorem}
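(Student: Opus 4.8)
The plan is to check correctness, then estimate the heuristic runtime, following the pattern of the proofs of Theorems~\ref{thm:algorithm1} and~\ref{thm:algorithm2}. Suppose the algorithm returns $(p,\pi)$. By Step~\ref{algorithm3step4} the integer $p=N_{K/\Q}(\alpha)$ is prime, so $\mathfrak{p}_1:=\alpha\O_K$ is a prime of residue degree~$1$; the element $\alpha\overline{\alpha}\in\O_{K_0}$ has $\Q$-norm~$p$, so it generates a degree-$1$ prime of $\O_{K_0}$, showing that $p$ splits in $K_0$ and that $\beta:=p\alpha^{-1}\overline{\alpha}^{-1}\in\O_{K_0}$ generates the conjugate prime. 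Step~\ref{algorithm3step5} asserts that $\beta\O_{K_0}$ stays inert in $K/K_0$; for $p$ unramified in $K$ one has $\mathfrak{p}_1\neq\overline{\mathfrak{p}_1}$, and then $p\O_K=\mathfrak{p}_1\overline{\mathfrak{p}_1}\mathfrak{p}_2$ with $\mathfrak{p}_1$ principal. (The finitely many primes $p$ ramifying in $K$ can be discarded by an extra check, which I would flag in passing.) Since $\pi=\alpha^2\beta=\alpha\overline{\alpha}^{-1}p$, Lemma~\ref{lem:alpha} shows $\pi$ is a Weil $p^2$-number satisfying Lemma~\ref{lem:mainlemma}, so by Honda--Tate and the corollary following that lemma, $\pi$ is the Frobenius of the Jacobian $\jacC$ of a genus-$2$ curve over $\F_{p^2}$ of $p$-rank~$1$, with $\#\jacC(\F_{p^2})=N_{K/\Q}(\pi-1)$.

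Next I would pin down the behaviour at $r$. Since $r$ splits completely in $K$ and complex conjugation permutes the four primes above~$r$ without fixed points, we may write $r\O_K=\mathfrak{r}\overline{\mathfrak{r}}\mathfrak{s}$ with $\mathfrak{s}=\mathfrak{r}_3\overline{\mathfrak{r}_3}$, all primes of norm~$r$. For a prime $\mathfrak{q}\mid r$ the map $\Z\to\O_K/\mathfrak{q}$ identifies the residue field with $\F_r$, and these identifications are compatible with conjugation (if $a\equiv n\pmod{\mathfrak{q}}$ with $n\in\Z$ then $\overline{a}\equiv n\pmod{\overline{\mathfrak{q}}}$). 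So the congruences of Step~\ref{algorithm3step3} give, in $\F_r$, $\alpha\equiv x$ and $\overline{\alpha}\equiv x\zeta$ modulo $\mathfrak{r}$, while $\alpha\equiv\overline{\alpha}\equiv x^{-1}$ modulo each prime dividing $\mathfrak{s}$. Computing the norm through $\O_K/r\O_K\cong\F_r^{4}$ then yields $p\equiv x\cdot x\zeta\cdot x^{-1}\cdot x^{-1}=\zeta\pmod r$, whence $\pi=p\alpha\overline{\alpha}^{-1}\equiv\zeta\cdot x\cdot(x\zeta)^{-1}=1\pmod{\mathfrak{r}}$. Therefore $\mathfrak{r}\mid\pi-1$, so $r=N(\mathfrak{r})$ divides $N_{K/\Q}(\pi-1)=\#\jacC(\F_{p^2})$, giving a subgroup of order~$r$; and since $p\equiv\zeta\pmod r$ with $\zeta$ of multiplicative order $2\kappa$, the field size $p^{2}\equiv\zeta^{2}$ has multiplicative order exactly~$\kappa$ in $(\Z/r\Z)^{*}$, so the embedding degree is~$\kappa$. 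This establishes correctness.

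For the runtime, fix $K$ satisfying the conditions of Lemma~\ref{lem:alphadensity}. The coordinates of $\alpha$ are bounded by $r/2$, so $p=O(r^{4})$ has $O(\log r)$ bits, and every step — the CRT construction of the `small'~$\alpha$, computing and primality-testing~$p$, and testing whether $\beta$ stays prime in $\O_K$ — runs in time polynomial in $\log r$ by~\cite{cohen}. Heuristically treating $N_{K/\Q}(\alpha)$, as $(x,\zeta)$ ranges over its $\gg r$ possibilities, as a random integer of the appropriate size in the residue class $\zeta\bmod r$, the Prime Number Theorem makes $p$ prime with probability $\gg 1/\log r$; and the hypotheses on $K$ ensure — this is exactly where the conditions of Lemma~\ref{lem:alphadensity} enter, via their equivalence with condition~(3) and the Chebotarev argument for $LH/H$ from the remark after Theorem~\ref{thm:algorithm2} — that, conditional on $p$ being an unramified prime, $\beta$ is inert in $K/K_0$ with probability at least a positive constant. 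Hence the heuristic expected number of returns to Step~\ref{algorithm3step2} is $O(\log r)$, so the total expected runtime is polynomial in $\log r$, and in particular in~$r$.

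I expect the main difficulty to be the congruence bookkeeping rather than any single estimate: the identity $p\equiv\zeta\pmod r$ — which rests on the compatibility of the residue-field identifications with complex conjugation — is what pins down the embedding degree, and it is somewhat delicate to state cleanly, as is the `small' lift of Step~\ref{algorithm3step3}. A secondary point worth making explicit is that the conditions of Lemma~\ref{lem:alphadensity} are not merely sufficient but necessary for termination: if condition~(3) fails then, by that lemma, no prime $p$ factors as $\mathfrak{p}_1\overline{\mathfrak{p}_1}\mathfrak{p}_2$ with $\mathfrak{p}_1$ principal, so no admissible $\alpha$ exists (apart from the excluded ramified primes) and Step~\ref{algorithm3step5} would always fail.
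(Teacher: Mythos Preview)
Your proof is correct and follows the same overall structure as the paper's: invoke Lemma~\ref{lem:alpha} for the $p$-rank-$1$ Weil number, verify the congruences at~$r$ to obtain $r\mid N(\pi-1)$ and the embedding degree, then argue the heuristic runtime as in Theorems~\ref{thm:algorithm1} and~\ref{thm:algorithm2}. The one substantive difference is in the congruence bookkeeping. The paper introduces the nontrivial automorphism $\phi$ of $K_0$, writes $\beta=\phi(\alpha\overline{\alpha})$, and computes $(\phi(\alpha\overline{\alpha})\bmod\mathfrak{r})=(\alpha\overline{\alpha}\bmod\mathfrak{s})=x^{-2}$, whence $\pi\equiv 1\pmod{\mathfrak{r}}$ and $p^2\equiv\zeta^2\pmod r$. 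You instead compute $p=N_{K/\Q}(\alpha)\bmod r$ as the product of the residues of~$\alpha$ at the four primes above~$r$ (using that the norm is the determinant of multiplication by~$\alpha$ on $\O_K/r\O_K\cong\F_r^{\,4}$), obtaining directly $p\equiv\zeta\pmod r$, and then read off $\pi=p\alpha\overline{\alpha}^{-1}\equiv 1\pmod{\mathfrak{r}}$. Your route avoids naming~$\phi$ and yields the slightly sharper $p\equiv\zeta$ rather than only $p^2\equiv\zeta^2$; the paper's route makes the role of~$\beta$ more transparent. Both are short and equivalent. Two minor remarks: your caveat about ramified primes is harmless but unnecessary, since once $p=N(\alpha)$ is prime and $\beta\O_{K_0}$ is inert in $K/K_0$ the factorization $p\O_K=\mathfrak{p}_1\overline{\mathfrak{p}_1}\mathfrak{p}_2$ with $e=1$ is automatic; and your observation that the conditions of Lemma~\ref{lem:alphadensity} are also necessary for termination is exactly the content of the paper's remark following the proof.
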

\begin{proof}
The facts that the
output has $p$-rank~$1$ and a Jacobian of order $N(\pi-1)$
are proven as in the proof of Theorem~\ref{thm:algorithm2}.

If $r$ divides the group order $N(\pi-1)$, then the embedding degree
is the order of $(p^2\bmod r)$ in the group $\F_r^*$
(see also \cite[Proposition 2.1]{freeman-stevenhagen-streng}).
So to prove that $\jacC$ has embedding degree $\kappa$
with respect to $r$, it suffices to 
prove that
 $p^2\bmod r$ is a primitive $\kappa$-th root of unity in $\F_r^*$
  and that $r$ divides $N(\pi-1)$.
 
  Let $\phi$ be the non-trivial automorphism of $K_0$.
  Then we have $\beta=\phi(\alpha\overline{\alpha})$,
  hence $\pi\bmod \mathfrak{r}=(\alpha\bmod
    \mathfrak{r})^2(\phi(\alpha\overline{\alpha})\bmod \mathfrak{r})$.
  Inside $\F_r$, we have
  \begin{eqnarray*}
    (\phi(\alpha\overline{\alpha})\bmod \mathfrak{r})
    &=& (\alpha\overline{\alpha}\bmod\mathfrak{s})
        = (\alpha\bmod \mathfrak{s})(\alpha\bmod \overline{\mathfrak{s}})\\
        &=& (\alpha\bmod \mathfrak{s})^2=x^{-2},
  \end{eqnarray*}
  hence we have $(\pi\bmod \mathfrak{r})=1$, so $r$ divides
  $N(\pi-1)$.
  Moreover,
  \begin{eqnarray*}
     (p^2\bmod r)&=&(p^2\bmod \mathfrak{r})
        =(\alpha\bmod\mathfrak{r})^2(\overline{\alpha}\bmod\mathfrak{r})^2
              (\phi(\alpha\overline{\alpha})\bmod\mathfrak{r})^2\\
       &=&(\alpha\bmod\mathfrak{r})^2
       ({\alpha}\bmod\overline{\mathfrak{r}})^2x^{-4}=\zeta^2
   \end{eqnarray*}
    is a primitive $\kappa$-th root of unity.  

  This finishes the proof of the correctness of the output.
  Next we prove the heuristic runtime.
  As $r$ splits completely, 
  $\alpha$ is a lift of some element modulo~$r$.
  We treat its norm $p=N(\alpha)$ as a random
  integer of $4\log_2 r$ bits.
  The rest of the proof is as the proof of Theorem~\ref{thm:algorithm2}.
\end{proof}
\begin{remark}
  Actually, the prime $r$ does not need to split completely
  in Algorithm~\ref{algo3}.
  It suffices to have
  $r\O_K=\mathfrak{r}\overline{\mathfrak{r}}\mathfrak{s}$,
  where $\mathfrak{r}$ is prime and $\mathfrak{s}$ may
  be prime or composite.
\end{remark}
\begin{remark}
  Note that if Algorithm~\ref{algo2} or~\ref{algo3} terminates,
  then $K$ satisfies the conditions of Lemma~\ref{lem:alphadensity},
  which are therefore not only sufficient, but also necessary
  for each of these algorithms to terminate.
\end{remark}

Let 
$A$ be a $g$-dimensional abelian variety over the finite field $k$
of $q$ elements. Its \emph{$\rho$-value}\index{rho-value@$\rho$-value}
with respect to a subgroup of $A(k)$ of order $r$ is defined
to be $\rho=g\log q/\log r$.
As we have $\log \#A(k)\approx g\log q$,
the $\rho$-value measures the ratio between the bit
size of $r$ and the bit size of the order of the
full group of rational
points on~$A$. It is at least about $1$ if $q$ is large.
If we have $A=\jacC$, then a point on $A$ can be represented by
a $g$-tuple of points on $C$, hence $\rho$ is also 
the ratio between the bit size of a group element of $A$
and the bit size of $r$. For cryptography, one
wants the
$\rho$-value to be as small as possible
to save bandwidth when transmitting points on~$\jacC$.

The prime $p$, computed as the norm of the element $\alpha$ in
Step~\ref{algorithm3step4},
is expected to satisfy $\log(p) \approx 4\log(r)$.
Since
our $p$-rank-$1$ curve is defined over~$\F_{p^2}$,
its $\rho$-value is
$\rho = 2\log(p^2)/\log(r) \approx 16$.
For a more detailed version of this heuristic analysis
of the $\rho$-value,
see Freeman, Stevenhagen, and Streng \cite{freeman-stevenhagen-streng},
who compute a $\rho$-value of about $8$ for their
ordinary abelian surfaces with prescribed embedding degree.
For cryptographic applications, a $\rho$-value
of $16$ or even $8$ is larger than desired,
but it does show that pairing-based cryptography
is possible for curves of genus~$2$ with $p$-rank~$1$.

When working with odd embedding degree $\kappa$,
the \emph{embedding field}\index{embedding field} $\F_p(\zeta_r)$ could
be smaller than the field $\F_{p^2}(\zeta_r)=\F_{p^{2\kappa}}$
that is suggested by 
the embedding degree $\kappa$ (see also Hitt
\cite{hitt-2007}).
This may influence the security of pairing-based cryptography,
but can easily be avoided by restricting to even embedding
degree $\kappa$, or by only accepting primes $p$
such that $r$ does not divide $p^\kappa-1$.

\section{Constructing curves with given Weil numbers}\label{sec:cmmethod}

We will now explain the explicit CM construction of a curve $C/\F_{p^2}$
such that $J(\redC)$ corresponds to our Weil $p^2$-number~$\pi$.
A more detailed exposition can be found in~\cite{hehcc18}. 

Honda's\aimention{Honda, Taira}\index{CM method} CM construction
of the abelian variety corresponding to a given Weil $q$-number
$\pi$ is based on the theory of \emph{complex multiplication} of abelian varieties
of Shimura and Taniyama \cite[in particular \S13, Thm.~1]{shimura-taniyama}.
The analogous theory for elliptic curves is even more classical
and dates back to the early 19th century. The first algorithmic application
of the CM construction of elliptic curves
is its application to primality proving by Atkin and Morain \cite{atkin-morain}.

The construction starts by taking an abelian variety $\liftA$ over a number field $F$
such that we have $\mathrm{End}(\liftA)\cong \O_K$, where $K$ is a field
containing $\pi$,
and reduces this variety modulo an appropriate prime $\mathfrak{P}$ of~$F$.
For our $p$-rank-$1$ Weil numbers $\pi$, one can take $K=\Q(\pi)$ and any prime
$\mathfrak{P}$ dividing~$p$.

In the dimension-$2$ case, instead of writing down the abelian
surface $\liftA$ itself, one only writes down
the \emph{absolute Igusa invariants}\index{Igusa!invariants}\index{absolute Igusa invariants} $j_1,j_2,j_3\in F$
of the curve $\liftC$ of which $\liftA$
is the Jacobian.
These invariants are the first three of a set of $10$ invariants
given on page 641 of \cite{igusa}.
One then reduces the invariants modulo $\mathfrak{P}$
and, assuming $(j_1\bmod\mathfrak{P})$ is a unit,
constructs $\redC=(\liftC\bmod\mathfrak{P})$ from the reduced invariants
using Mestre's algorithm~\cite{mestre}.
Honda's construction shows that $J(\redC)$ or its quadratic twist
corresponds to our Weil $p^2$-number~$\pi$.

In all practical implementations, the invariants $j_n\in F$ are represented by
polynomials $H_1,H_2,H_3$ or $H_1,\widehat{H}_2,\widehat{H}_3$
called \emph{Igusa class polynomials}.\index{Igusa!class polynomials}\index{class polynomial!Igusa}
We explain the polynomials $\widehat{H}_n$ later, but the polynomials $H_n$ are given by
$$H_n=\prod_{\liftC}(X-j_n(\liftC)),$$
where the product ranges over isomorphism classes of curves $\liftC$ such that
we have $\mathrm{End}(J(\liftC))\cong \O_K$.
For every triple $(j_1,j_2,j_3)$ of
zeroes $j_n\in\overline{\F_p}$ of $H_n$ with $j_1\not=0$,
one thus obtains
a unique $\overline{\F_p}$-isomorphism class of curves.
Assuming $j_1(\liftC)\not\in\mathfrak{P}$ for some~$\liftC$,
a twist of at least one of the curves we obtain has Weil number $\pi$.
Let $\redC$ be such a curve.
As we know the group order $N(\pi-1)$
of $J(\redC)(\F_{p^2})$, we can
quickly check whether we have the correct curve by taking random points
on its Jacobian and multiplying them by $N(\pi-1)$.

As the field $K$ is fixed, so are its class polynomials.
They can therefore be precomputed using any of the three known algorithms:
the complex analytic method of
Spallek \cite{spallek} and van Wamelen \cite{vanwamelen}, for which Streng \cite{streng} recently gave the first runtime analysis and proof of correctness,
the $2$-adic method of Gaudry, Houtmann, Kohel, Ritzenthaler, and Weng \cite{ghkrw-2adic},
and the Chinese remainder method of Eisentr\"ager and Lauter \cite{eisentrager-lauter}.
Alternatively, class polynomials
can be found in the ECHIDNA database \cite{echidna}.

The alternative class polynomials $\widehat{H}_n$ are given by
$$ \widehat{H}_n=\sum_{\liftC} j_n(\liftC)
\prod_{\liftC'\not\cong \liftC} (X-j_1(\liftC')),\quad (n=2,3)$$
where both the product and the sum range over 
isomorphism classes of curves $\liftC$ for which $\mathrm{End}(J(\liftC))\cong \O_K$ holds.
For any such $\liftC$, we have $j_n(\liftC) H_1'(j_1(\liftC))=\widehat{H}_n(j_1(\liftC))$.
This implies that if every coefficient of $H_1$ has a denominator that is not divisible by~$p$,
and $(H_1\bmod p)$ has a non-zero root of multiplicity~$1$,
then we can compute the
Igusa invariants of a curve $\redC$, which is automatically either
the curve we want or a quadratic twist.
The idea of using $\widehat{H}_n$ and not the more standard Lagrange interpolation
is due to Gaudry, Houtmann, Kohel, Ritzenthaler, and Weng,
who show in \cite{ghkrw-2adic} that $\widehat{H}_n$ heuristically has a much smaller height.

\section{A sufficient and necessary condition for Algorithm 1}
\label{sec:suffnec}

As said before, the condition of Lemma~\ref{lem:alphadensity}
are sufficient for all three algorithms to work and necessary
for Algorithms 2 and~3. They are also easy to check and
true for `most' non-Galois quartic \cmfields. The current section
gives a weaker condition that is both
sufficient and necessary for Algorithm~1 to work. We also give examples
to show that this condition is non-trivial
and strictly weaker than that of Lemma~\ref{lem:alphadensity}.

Let $K$ be a non-Galois {\cmfield } of degree $4$.
Let $\liftC/\overline{K}$ be
a curve of genus~$2$ over the algebraic closure $\overline{K}$ of $K$
such that $\mathrm{End}(J_{\liftC})\cong \O_K$ holds.
Such $C$ are known to exist.
The field $\Q(j)\subset \overline{K}$ generated
over $\Q$ by all $10$ absolute Igusa invariants $j_1(\liftC),\ldots,j_{10}(\liftC)$
of \cite[page 641]{igusa}
is called the \emph{field of moduli}\index{field of moduli}
of~$\liftC$. For any subfield $X\subset \overline{K}$,
let $X(j)$ be the compositum $X\cdot \Q(j)$.
Write $K=K_0(\sqrt{r})$ for some $r\in K_0$ and let $K_0^{\mathrm{r}}=\Q(\sqrt{N_{K_0/\Q}(r)})$
(as before).
\begin{lemma}\label{lem:pidensity}
Let $K, K_0^{\mathrm{r}}, K(j)$ be as above and let $G$
be the Galois group of the normal closure of $K(j)$ over~$\Q$.
Let $S$ be the set of primes $p$ that factor in $K$
as $p\O_K=\pfactor$ and such that there exists a Weil
$p^2$-number $\pi$ such that we have $\pi\O_K=\pifactor$.

The Dirichlet density of $S$ is
$$\frac{\#\{\sigma\in G \mid \ord\sigma=2, \sigma_{|K_0^{\mathrm{r}}}\not=\mathrm{id}_{K_0^{\mathrm{r}}}\}}{\# G}.$$
If $S$ is non-empty, then it has positive density.
\end{lemma}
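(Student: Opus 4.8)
The plan is to interpret the existence of a $p$-rank-1 Weil $p^2$-number in $K$ as a statement about splitting of primes in a specific number field, and then apply the Chebotarev density theorem. The guiding idea from Lemma~\ref{lem:alpha} and its proof is that if $p\O_K=\pfactor$, then a suitable $\pi$ with $\pi\O_K=\pifactor$ exists if and only if the ideal $\mathfrak{p}_1\overline{\mathfrak{p}_1}^{-1}$ is principal (generated by some $\gamma$) \emph{and} the unit $\gamma\overline{\gamma}$ can be corrected to land on $p^2$; equivalently, $\pi=\gamma\overline{\gamma}^{-1}p$ works after adjusting $\gamma$ by a unit. The crucial reformulation — which is where the field of moduli $K(j)$ enters — is Honda--Tate together with the theory of CM: the abelian surface with CM by $\O_K$ and the prescribed Frobenius exists over some finite field exactly when $p$ splits in the reflex-type situation governing the field of moduli. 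Concretely, I expect the clean statement to be: such a $\pi$ exists (with $p\O_K=\pfactor$) if and only if there is a prime $\mathfrak{P}$ of $K(j)$ above $p$ whose Frobenius (or rather, the associated element of $G=\Gal(\widetilde{K(j)}/\Q)$) has order $2$ and acts nontrivially on $K_0^{\mathrm r}$.

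First I would set up the normal closure $M$ of $K(j)$ over $\Q$, with $G=\Gal(M/\Q)$, and recall that $K(j)$ contains $K$, hence $M$ contains the normal closure $L$ of $K$, whose Galois group is $D_4$ and which contains $K_0$, $K_0^{\mathrm r}$, and the compositum $L_0=K_0K_0^{\mathrm r}$ (as in the proof of Lemma~\ref{lem:alphadensity}). For a prime $p$ unramified in $M$, the factorization $p\O_K=\pfactor$ is governed by the cycle type of the Frobenius class on $K$, which is exactly the condition that the Frobenius restricted to $L$ generates a subgroup conjugate to $\Gal(L/K)$ (an order-$2$ subgroup acting as a reflection in $D_4$); this is the same bookkeeping already carried out in Lemma~\ref{lem:alphadensity}. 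The extra input is that, on top of this, the Weil number with the right ideal factorization exists precisely when $p$ splits completely in the extension $K(j)/K$-part, i.e.\ when the Frobenius of $\mathfrak{P}$ in $M$, after restricting, still has order $2$ — in other words the larger field $K(j)$ does not force a bigger Frobenius order. I would justify this by citing the CM construction of Section~\ref{sec:cmmethod}: the reduction modulo $\mathfrak{P}$ of the CM abelian surface has the desired Weil number over $\F_{p^2}$, and conversely if such a Weil number exists, the associated abelian surface has CM by $\O_K$ and field of moduli contained in $\F_{p^2}$, forcing $\mathfrak{P}$ to have residue degree dividing $2$.

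Next I would package this as: $p\in S$ (for $p$ unramified in $M$) if and only if $\mathrm{Frob}_{\mathfrak{P}}$ lies in the conjugacy-closed set $T=\{\sigma\in G\mid \ord\sigma=2,\ \sigma_{|K_0^{\mathrm r}}\ne\mathrm{id}\}$, and then invoke Chebotarev's density theorem \cite[Theorem 13.4]{neukirch}: the density of $S$ equals $\#T/\#G$, which is the claimed formula. The nontriviality on $K_0^{\mathrm r}$ is exactly what in Lemma~\ref{lem:alphadensity}(4) corresponds to $\mathfrak{p}_1\overline{\mathfrak{p}_1}^{-1}$ being non-principal-obstruction-free, rephrased on the Galois side — I would double-check that an element $\sigma\in G$ of order $2$ restricting nontrivially to $K_0^{\mathrm r}$ automatically restricts to $L$ as a reflection in $D_4$ (hence gives the correct factorization $p\O_K=\pfactor$), so that no prime outside the desired splitting type is counted. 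Finally, for the last sentence: if $S\ne\emptyset$ then $T\ne\emptyset$, and since $\#T/\#G>0$ whenever $T\ne\emptyset$, the set $S$ has positive density; equivalently, one could note that having one such $p$ produces one $\sigma\in T$, hence by Chebotarev infinitely many such primes with positive density.

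The main obstacle I anticipate is pinning down precisely the role of the field of moduli $K(j)$ — i.e.\ proving rigorously that the existence of the Weil number with the right ideal factorization is \emph{equivalent} to (not merely implied by) $\mathrm{Frob}_{\mathfrak{P}}$ having order $2$ in $G$. The forward direction uses the CM construction directly; the converse requires knowing that a $p$-rank-$1$ abelian surface over $\F_{p^2}$ with $\Q(\pi)=K$ actually has CM by $\O_K$ up to isogeny (true by Honda--Tate, since the order of $\pi$ generates an order in $K$ and one can pass to the maximal order), and that its field of moduli is then contained in $\F_{p^2}$, which constrains the residue degree of $\mathfrak{P}$. Handling the ramified primes (a finite set, density zero) is routine and can be dismissed in a line. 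I would also need to be slightly careful that $\sigma_{|K_0^{\mathrm r}}\ne\mathrm{id}$ together with $\ord\sigma=2$ is really the full condition and not, say, an additional independent class-group obstruction; I expect this to work out because the field of moduli already ``knows'' the relevant class field theory, but verifying the compatibility between the Galois-theoretic condition and the ideal-theoretic one in Lemma~\ref{lem:alphadensity} is the delicate point.
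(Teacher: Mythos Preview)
Your overall plan---characterize membership in $S$ by a Frobenius condition in $G$ and then apply Chebotarev---matches the paper's, and your check that an order-$2$ element of $G$ nontrivial on $K_0^{\mathrm{r}}$ must restrict to $L$ as a conjugate of the generator of $\Gal(L/K)$ is correct and is used verbatim. The gap is in how you justify the key equivalence. You propose to argue geometrically, via reduction of CM abelian surfaces and the observation that the Igusa invariants land in $\F_{p^2}$; but this leaves two holes. For the direction ``$\sigma$ of order $2$ $\Rightarrow$ $\pi$ exists'', reducing a CM surface modulo a degree-$2$ prime gives an abelian surface over $\F_{p^2}$, yet you still have to know that its Frobenius generates precisely $\pifactor$ and satisfies $\pi\overline{\pi}\in\Q^*$---this is Shimura--Taniyama reciprocity, not Honda--Tate. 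For the converse, having the Igusa invariants in $\F_{p^2}$ bounds only the residue degree of a prime of $\Q(j)$, not the order of $\sigma$ in the full normal closure; the paper closes this by observing that every conjugate ${}^\tau\Q(j)$ is itself the field of moduli of ${}^\tau\liftC$, so $\sigma^2$ is trivial on all conjugates of $\Q(j)$ at once.

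The tool you are missing---and which never appears in your proposal---is the reflex field $K^{\mathrm{r}}$ together with the Main Theorem of complex multiplication (Theorem~\ref{thm:mainthmcm}) and the type-norm computation (Lemma~\ref{lem:computetypenorm}). The paper shows that a prime $\mathfrak{q}$ of $K^{\mathrm{r}}$ over $p$ has $N_{\Phi^{\mathrm{r}}}(\mathfrak{q})=\pifactor$, so that $\pifactor$ is generated by an element with $\pi\overline{\pi}\in\Q^*$ if and only if $[\mathfrak{q}]\in H_0$, which by Theorem~\ref{thm:mainthmcm} is exactly the statement that the Frobenius at $\mathfrak{q}$ is trivial in $\Gal(K^{\mathrm{r}}(j)/K^{\mathrm{r}})$, i.e., that $\sigma^2$ acts trivially on $K^{\mathrm{r}}(j)$. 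Your reduction-and-field-of-moduli argument is essentially an attempt to reprove this class-field-theoretic statement from scratch; you should instead invoke it directly.
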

\begin{corollary}\label{cor:pidensity}
  If Algorithm~1
terminates on input~$K$, then $\sigma$ as in
 Lemma~\ref{lem:pidensity} exists for~$K$.
Conversely, if $K$ is fixed and $\sigma$ exists for~$K$,
then Algorithm~1 heuristically has a polynomial runtime.
\end{corollary}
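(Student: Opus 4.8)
The plan is to reduce the corollary to Lemma~\ref{lem:pidensity} together with the runtime analysis already carried out for Algorithm~1 in Theorem~\ref{thm:algorithm1}. The only genuinely new content is the observation that Algorithm~1, with the filtering done in Steps~3--5, succeeds on a prime $p$ precisely when $p$ lies in the set $S$ of Lemma~\ref{lem:pidensity}; everything else is bookkeeping about densities and the Prime Number Theorem.

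First I would verify that ``$p$ is accepted by Steps~2--5 of Algorithm~1'' is equivalent to ``$p\in S$.'' A prime $p$ passes Step~3 iff $p\O_K=\pfactor$. Given this, the ideal $\pifactor$ is well-defined, and $p$ passes Steps~4--5 iff that ideal is principal with a generator $\pi_0$ whose ``unit defect'' $v=\pi_0\overline{\pi_0}p^{-2}\in\O_{K_0}^*$ is a norm from $\O_K^*$; in that case $\pi=w^{-1}\pi_0$ satisfies $\pi\overline{\pi}=p^2$ and $\pi\O_K=\pifactor$, so $\pi$ is a Weil $p^2$-number with $\pi\O_K=\pifactor$. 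Conversely, if such a Weil number $\pi$ exists, then $\pi$ itself is a generator of $\pifactor$ establishing principality, and writing $\pi_0=u\pi$ for the generator found in Step~4 one gets $v=u\overline{u}$, which is a norm since $u\overline{u}=N_{K/K_0}(u)$ up to a unit of $\O_{K_0}$ — here one must be slightly careful, as in the proof of Theorem~\ref{thm:algorithm1}, that the generator produced in Step~4 differs from $\pi$ only by a unit, so that $v$ indeed lies in the image of the norm map on units. Thus Steps~2--5 accept $p$ iff $p\in S$. Step~6 (and the ``$\#\jacC(\F_{p^2})$ prime'' requirement) is an additional, $K$-independent filter whose success probability was already shown in the proof of Theorem~\ref{thm:algorithm1} to be at least an inverse-polynomial in $\ell$, under no hypothesis beyond ``$K$ has no prime of norm~$2$ and no prime above~$2$ ramifies in $K/K_0$'' — which, as remarked after that theorem, can be arranged or weakened, and in any case is implied here once $S\neq\emptyset$ by the Lemma following Theorem~\ref{thm:algorithm1} about even group orders (since $p\in S$ forces $p\O_K=\pfactor$, but this does not by itself rule out a norm-$2$ prime; I will address this point below).

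With this equivalence in hand, the two directions of the corollary follow quickly. For the first: if Algorithm~1 terminates on input~$K$, then in particular it passed Step~3 and Steps~4--5 for some prime $p$, so by the equivalence $S\ni p$, hence $S\neq\emptyset$, and by the last sentence of Lemma~\ref{lem:pidensity} the set $S$ has positive density; by the density formula, positivity of the density is exactly the assertion that some $\sigma\in G$ with $\ord\sigma=2$ and $\sigma_{|K_0^{\mathrm{r}}}\neq\mathrm{id}$ exists. For the converse: if $\sigma$ exists for the fixed field~$K$, then $S$ has density $\delta=\delta(K)>0$, a positive constant depending only on~$K$. By the Prime Number Theorem, a random $\ell$-bit integer is prime with probability $\asymp 1/\ell$, and conditional on being prime it lies in $S$ with probability $\to\delta$ by the Chebotarev/Dirichlet density statement; the subsequent primality test on $N(u\pi-1)$ in Step~6 succeeds with inverse-polynomial probability by the argument already in the proof of Theorem~\ref{thm:algorithm1}. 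Multiplying these, the expected number of iterations of Step~1 is polynomial in~$\ell$, and each iteration runs in time polynomial in~$\ell$ since $K$ is fixed and all numbers encountered have height linear in~$\ell$ (again as in the proof of Theorem~\ref{thm:algorithm1}). Hence the heuristic expected runtime is polynomial in~$\ell$.

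The main obstacle I anticipate is not any of the density manipulations but the interface with Step~6: the heuristic primality argument in the proof of Theorem~\ref{thm:algorithm1} was run under the standing hypotheses that $K$ has no prime of norm~$2$ and no prime above~$2$ is ramified in $K/K_0$, whereas Corollary~\ref{cor:pidensity} drops these hypotheses. One must therefore re-examine the lower bound for $P_2$ (and, strictly, for all $P_q$ with $q\le 17$). The clean fix is to allow the output group order to be ``almost prime'' — a prime times a bounded factor — exactly as the remark preceding the even-order Lemma suggests, under which the obstruction at small primes disappears; alternatively, one simply notes that whenever the conclusion of the corollary is that Algorithm~1 ``heuristically has polynomial runtime,'' the same caveat that accompanies Theorem~\ref{thm:algorithm1} is understood to apply, and $\delta(K)>0$ is the only genuinely new input. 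I would state the corollary's proof with this caveat made explicit, so that the reduction to Lemma~\ref{lem:pidensity} and Theorem~\ref{thm:algorithm1} is airtight.
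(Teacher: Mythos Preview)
Your proposal is correct and follows essentially the same approach as the paper: termination of Algorithm~1 implies $S\neq\emptyset$, which by Lemma~\ref{lem:pidensity} gives the existence of~$\sigma$; conversely, existence of $\sigma$ gives $S$ positive density, at which point the runtime analysis of Theorem~\ref{thm:algorithm1} carries over verbatim. You are in fact more careful than the paper about the Step~6 hypotheses on primes above~$2$---the paper's two-line proof simply declares ``the proof of Theorem~\ref{thm:algorithm1} is valid'' and leaves those caveats implicit.
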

\begin{proof}[{\textbf{\textup{Proof of Corollary \ref{cor:pidensity}}}}]
  If Algorithm~1 terminates, then $S$ is non-empty, hence
  $\sigma$ exists by Lemma~\ref{lem:pidensity}.
  If $\sigma$ exists, then the proof of Theorem~\ref{thm:algorithm1}
  is valid, so Algorithm~1 heuristically has a polynomial runtime.
\end{proof}
To prove Lemma~\ref{lem:pidensity}, we need some more theory.
Let $L$ be the normal closure of $K$.
A \emph{\cmtype}\index{CM-type} of $K$ is a set $\Phi$
of two embeddings $\varphi:K\rightarrow L$ that satisfies
$\Phi\cap\overline{\Phi}=\emptyset$.
Let $\liftC$ be a curve as above,
and let $\Phi=\{\varphi_1,\varphi_2\}$ be its {\cmtype } as defined in
\cite[\S5.2]{shimura-taniyama}.
The exact definition of this {\cmtype } will not be important to us.

The \emph{reflex field}\index{reflex!field}
$$K^{\mathrm{r}}=\Q(\textstyle{\sum_i\varphi_i(x)} : x\in K)\subset L$$
of $K$ with respect to $\Phi$
is one of the two non-Galois CM subfields of $L$ of degree $4$ that are not
conjugates of~$K$.
Its real quadratic subfield $K_0^{\mathrm{r}}$
does not depend on $\Phi$ and is exactly the field $K_0^{\mathrm{r}}$
that we have seen above Lemma \ref{lem:alphadensity}.
By \cite[Prop.~20.3(i)]{shimura}, we have $K_0^{\mathrm{r}}\subset \Q(j)$,
so that we have the inclusions of fields shown in Figure~\ref{figure:fielddiagram}.
\begin{figure} 
\[
\begin{xy}
    \xymatrix{
    & & & K^{\mathrm{r}}(j) \\
    & L & & \\
    K \ar@{-}[ru]^2 & & K^{\mathrm{r}} \ar@{-}[lu]_2 \ar@{-}[ruu]&
        \Q(j)\ar@{-}[uu]\\
    &&&\\
    K_0 \ar@{-}[uu]^2 & & K_0^{\mathrm{r}} \ar@{-}[uu]_2 \ar@{-}[ruu]& \\
    & \Q \ar@{-}[lu]^2 \ar@{-}[ru]_2& & 
    }
\end{xy}
\]
\caption{Inclusions between the fields
\label{figure:fielddiagram}}
\end{figure}
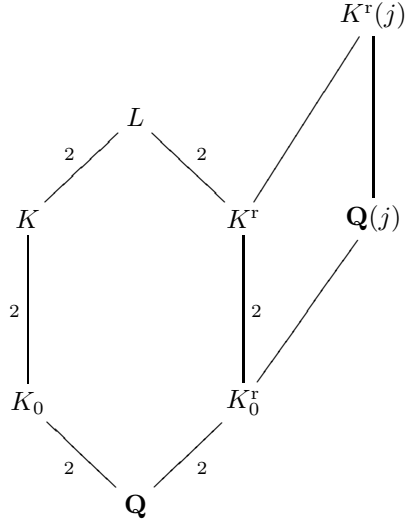

The main theorem of complex multiplication gives $K^{\mathrm{r}}(j)$ as
an unramified abelian extension of~$K^{\mathrm{r}}$.
To state it, we need to define the \emph{type norm} of the \emph{reflex type} of~$\Phi$.
Let $\Phi_L$ be the set of extensions of elements of $\Phi$ to $L$,
so $\Phi_L$ is a {\cmtype } of $L$ and so is the set
$\Phi_L^{-1}$ of inverses of elements of $L$.
The set of restrictions of $\Phi_L^{-1}$ to $K^{\mathrm{r}}$
is a {\cmtype } $\Phi^{\mathrm{r}}=\{\psi_1,\psi_2\}$
of $K^{\mathrm{r}}$ called the \emph{reflex}\index{reflex!CM-type} of $\Phi$
\cite[\S8.3]{shimura-taniyama}.
By \cite[\S8.3 Prop.~29]{shimura-taniyama},
for any fractional $\O_{K^{\mathrm{r}}}$-ideal~$\mathfrak{a}$,
there is a unique fractional
$\O_K$-ideal $N_{\Phi^{\mathrm{r}}}(\mathfrak{a})$
such that we have
$$N_{\Phi^{\mathrm{r}}}(\mathfrak{a})\O_L=\prod_{i=1}^2\psi_i(\mathfrak{a})\O_L.$$
The map $N_{\Phi^{\mathrm{r}}}$ from ideals of $K^{\mathrm{r}}$ to ideals
of $K$ is called the \emph{type norm}\index{type norm} with respect to $\Phi^{\mathrm{r}}$.
\begin{theorem}[{Main Theorem 1 in \S15.3 of \cite{shimura-taniyama}}]\label{thm:mainthmcm}
   The extension $K^{\mathrm{r}}(j)/K^{\mathrm{r}}$
   is abelian and unramified.
   Its Galois group corresponds via the Artin
   map to $\cl_{K^{\mathrm{r}}}/H_0$,
   where $H_0$ is the group of ideal classes $[\mathfrak{a}]$
   such that $N_{\Phi^{\mathrm{r}}}(\mathfrak{a})$
   is principal and generated by
   an element $\mu\in K$ with $\mu\overline{\mu}\in \Q^*$.\qed
\end{theorem}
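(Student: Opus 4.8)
The plan is to deduce the statement from the general theory of complex multiplication of abelian varieties, realizing $K^{\mathrm{r}}(j)$ as a field of moduli and translating the Galois action on CM points into the ideal arithmetic of $K^{\mathrm{r}}$ via the reflex type norm $N_{\Phi^{\mathrm{r}}}$. First I would set up the analytic parameterization. Every principally polarized abelian surface over $\C$ with complex multiplication by $\O_K$ of type $\Phi$ arises as $A_{\mathfrak{a}}=\C^2/\Phi(\mathfrak{a})$ for a fractional $\O_K$-ideal $\mathfrak{a}$, equipped with a Riemann form attached to a suitable $\xi\in K^*$ with $\overline{\xi}=-\xi$ and the appropriate $\Phi$-positivity. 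Two such pairs are isomorphic as polarized abelian varieties precisely when the underlying ideals differ by a principal ideal $(\mu)$ with $\mu\overline{\mu}\in\Q^*$, the condition that scaling by $\mu$ preserves the polarization up to a rational factor (Torelli rigidity pins down the polarization of a genus-$2$ Jacobian, so the Igusa invariants see exactly this polarized isomorphism class). Hence the finite set of isomorphism classes, equivalently the set of Igusa-invariant tuples $(j_1,\ldots,j_{10})$ of the corresponding curves, is a torsor under a quotient of $\cl_K$, the field generated by one such tuple over $\Q$ is its field of moduli $\Q(j)$, and adjoining these invariants to $K^{\mathrm{r}}$ gives $K^{\mathrm{r}}(j)$.

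The heart of the argument is the Shimura--Taniyama reciprocity law. For $\sigma\in\Gal(\overline{\Q}/K^{\mathrm{r}})$ whose restriction to the maximal abelian extension corresponds under the Artin map to the ideal class $[\mathfrak{b}]$ of $K^{\mathrm{r}}$, one shows $A_{\mathfrak{a}}^{\sigma}\cong A_{N_{\Phi^{\mathrm{r}}}(\mathfrak{b})^{-1}\mathfrak{a}}$ as polarized abelian varieties, with the polarization transforming compatibly. Granting this, $\sigma$ fixes every Igusa invariant, i.e.\ fixes $K^{\mathrm{r}}(j)$, if and only if $A_{\mathfrak{a}}^{\sigma}\cong A_{\mathfrak{a}}$, which by the isomorphism criterion above holds exactly when $N_{\Phi^{\mathrm{r}}}(\mathfrak{b})$ is principal and generated by some $\mu\in K$ with $\mu\overline{\mu}\in\Q^*$ (the inverse on $\mathfrak{b}$ does not affect this condition). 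This is precisely the subgroup $H_0$, so the Artin map induces $\Gal(K^{\mathrm{r}}(j)/K^{\mathrm{r}})\cong\cl_{K^{\mathrm{r}}}/H_0$. In particular the action of $\Gal(\overline{\Q}/K^{\mathrm{r}})$ on the invariants factors through the abelian group $\cl_{K^{\mathrm{r}}}$, so $K^{\mathrm{r}}(j)/K^{\mathrm{r}}$ is abelian; and since it corresponds to a quotient of the ideal class group, i.e.\ the ray class group of trivial conductor, it is contained in the Hilbert class field of $K^{\mathrm{r}}$ and is therefore unramified everywhere.

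The main obstacle is establishing the reciprocity formula $A_{\mathfrak{a}}^{\sigma}\cong A_{N_{\Phi^{\mathrm{r}}}(\mathfrak{b})^{-1}\mathfrak{a}}$, which is the substantive content of the main theorem of complex multiplication. I would prove it by the standard reduction argument: for a density-one set of primes $\mathfrak{b}$ of $K^{\mathrm{r}}$ of degree~$1$ over a rational prime $p$ with good reduction, choose a prime $\mathfrak{P}$ of a large field of definition above $p$, reduce $A_{\mathfrak{a}}$ modulo $\mathfrak{P}$, and identify the Frobenius endomorphism of the reduction with multiplication by a generator of the type-norm ideal $N_{\Phi^{\mathrm{r}}}(\mathfrak{b})$ using the congruence relations of \cite[\S13]{shimura-taniyama}. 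Lifting this equality of isogenies back to characteristic~$0$ yields the stated isomorphism for such $\mathfrak{b}$, and Chebotarev density together with continuity of the Galois action on the finite invariant set extends it to all of $\Gal(\overline{\Q}/K^{\mathrm{r}})$. The delicate points are the compatibility of the polarizations under $\sigma$ and the passage from the Frobenius congruence to the clean ideal-theoretic identity; once these are in place, the remainder is bookkeeping with ideal classes and the $\mu\overline{\mu}\in\Q^*$ condition.
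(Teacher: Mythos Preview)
The paper does not prove this theorem at all: it is quoted verbatim as Main Theorem~1 in \S15.3 of \cite{shimura-taniyama}, and the \qed\ at the end of the statement signals that no proof is supplied. So there is no argument in the paper to compare your proposal against; the authors simply invoke the classical result and move on to use it in Lemmas~\ref{lem:computetypenorm} and~\ref{lem:incmclassfield}.

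Your sketch is a reasonable outline of how Shimura and Taniyama themselves establish the result, and in that sense it is exactly the argument the paper is citing rather than a different route. A couple of points would need tightening if you actually wrote it out: the parametrization of \emph{principally polarized} CM abelian surfaces by fractional ideals $\mathfrak{a}$ requires an accompanying choice of $\xi$ with $\xi\overline{\xi}\mathfrak{a}\overline{\mathfrak{a}}$ equal to the inverse different, not every $\mathfrak{a}$ works, and the set of isomorphism classes is a torsor under a group slightly different from a quotient of $\cl_K$ (one has to track the polarization data); and the identification of Frobenius with a generator of $N_{\Phi^{\mathrm{r}}}(\mathfrak{b})$ is precisely the Shimura--Taniyama congruence relation, so invoking it here is circular unless you mean to reprove \S13 of \cite{shimura-taniyama} as well. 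But since the paper treats the whole theorem as a black box, none of this matters for the present purpose.
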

The following lemma computes $N_{\Phi^{\mathrm{r}}}(\mathfrak{q})$ for certain primes~$\mathfrak{q}$.
\begin{lemma}\label{lem:computetypenorm}\label{lem:inreflex}
  Let $K$ be a quartic {\cmfield } and $p$ a prime that
  factors in $K$ as $p\O_K=\pfactor ^e$.
\begin{enumerate}
 \item The prime $p$ factors in $K_0^{\mathrm{r}}$ as $\mathfrak{s}^e$ for a prime $\mathfrak{s}$,
       which  splits in $K^{\mathrm{r}}$
       as $\mathfrak{s}\O_{K^{\mathrm{r}}}=\mathfrak{q}\overline{\mathfrak{q}}$; and
 \item we have $N_{\Phi^{\mathrm{r}}}(\mathfrak{q})=\mathfrak{p}_1^{\smash{2/e}}\mathfrak{p}_2$
       (up to complex conjugation).
\end{enumerate}
\end{lemma}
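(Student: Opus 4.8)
The plan is to prove (1) and (2) by tracking the factorization of $p$ through the Galois closure $L$ and using the defining property of the type norm $N_{\Phi^{\mathrm{r}}}$ in $\O_L$. First I would fix the Galois group $G=\Gal(L/\Q)\cong D_4$ and recall the correspondence between subfields of $L$ and subgroups of $G$: the field $K$ corresponds to a non-normal subgroup $\langle\tau\rangle$ of order $2$ (complex conjugation restricted to $K$), $K_0$ to the subgroup of order $4$ containing it, and $K^{\mathrm{r}}$, $K_0^{\mathrm{r}}$ to the ``reflex'' copies, which sit inside $L$ as the conjugate-by-an-order-$4$-rotation images. For part (1), I would take a prime $\mathfrak{P}$ of $L$ above $p$ and let $D\subseteq G$ be its decomposition group; the hypothesis $p\O_K=\pfactor^e$ tells us precisely how the double cosets of $D$ in $G$ with respect to $\langle\tau\rangle$ behave, namely that there are exactly two primes of $K_0$ above $p$, one ($\mathfrak{q}:=\mathfrak{p}_1\overline{\mathfrak p_1}\cap\O_{K_0}$, splitting in $K$) and one ($\mathfrak p_2\cap\O_{K_0}$, inert or ramified in $K$ according to $e$). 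A short case analysis on the possible $D$ (cyclic of order $2$ generated by a reflection vs. a rotation, etc.) pins $D$ down up to conjugacy, and then reading the double cosets of $D$ with respect to the reflex subgroup $\langle\tau'\rangle$ (where $\tau'$ is complex conjugation on $K^{\mathrm r}$) yields that $p$ has a single prime $\mathfrak{s}$ of $K_0^{\mathrm r}$ above it with the stated ramification $e$, and that $\mathfrak s$ splits as $\mathfrak q\overline{\mathfrak q}$ in $K^{\mathrm r}$. The key structural input here is exactly the asymmetry of the factorization $\pfactor^e$: it forces $D$ to be one of the order-$2$ subgroups, which is what makes the reflex factorization come out as claimed.

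For part (2), I would use the characterizing identity $N_{\Phi^{\mathrm r}}(\mathfrak q)\O_L=\prod_{i=1}^2\psi_i(\mathfrak q)\O_L$, where $\Phi^{\mathrm r}=\{\psi_1,\psi_2\}$ is the reflex {\cmtype} of $\Phi$. Since $\mathfrak q$ is one of the two primes of $K^{\mathrm r}$ above $p$, each $\psi_i(\mathfrak q)\O_L$ is a product of the primes of $L$ above $p$ indexed by a coset, and combining over $i=1,2$ and intersecting back down to $\O_K$ gives an ideal of $K$ supported at $p$; it must therefore be of the form $\mathfrak p_1^a\overline{\mathfrak p_1}^b\mathfrak p_2^c$ (recall $p\O_K=\pfactor^e$ means $\mathfrak p_2=\mathfrak p_2$ in $K_0$ stays or ramifies). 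Computing the exponents is a bookkeeping exercise in the group $G$: one writes the {\cmtype} $\Phi$ as a pair of cosets of $\langle\tau\rangle$ in $G$, forms $\Phi_L$, inverts to get $\Phi_L^{-1}$, restricts to $K^{\mathrm r}$, and then the product $\psi_1(\mathfrak q)\psi_2(\mathfrak q)$ in $\O_L$ is computed coset-by-coset; taking the norm down to $K$ and dividing by $p^{\,?}$-factors identifies the answer as $\mathfrak p_1^{2/e}\mathfrak p_2$ up to swapping $\mathfrak p_1$ and $\overline{\mathfrak p_1}$. The ``$2/e$'' appears because when $e=2$ the prime $\mathfrak s$ of $K_0^{\mathrm r}$ is ramified, so each $\psi_i(\mathfrak q)$ already carries the ramification, halving the exponent relative to the $e=1$ case.

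The main obstacle I anticipate is the combinatorial identification of the decomposition group and the careful matching of which coset of $\langle\tau\rangle$ corresponds to $\mathfrak p_1$ versus $\overline{\mathfrak p_1}$ — this is exactly why the statement is only claimed ``up to complex conjugation.'' Concretely, the subtlety is that the {\cmtype} $\Phi$ of the curve $\liftC$ (hence the reflex type $\Phi^{\mathrm r}$) is only well-defined up to the choices inherent in Shimura--Taniyama's definition, and different choices permute $\mathfrak p_1$ and $\overline{\mathfrak p_1}$; so I would phrase the whole computation modulo that ambiguity from the start, rather than trying to nail down an absolute normalization. A clean way to organize the group-theoretic part is to work with a concrete presentation $G=\langle\rho,\tau\mid\rho^4=\tau^2=1,\ \tau\rho\tau=\rho^{-1}\rangle$ with $K$ fixed by $\langle\tau\rangle$, $K^{\mathrm r}$ fixed by $\langle\rho^2\tau\rangle$ (or $\langle\rho\tau\rangle$, depending on $\Phi$), and then every ideal factorization statement becomes an explicit double-coset count, which keeps the ``routine calculation'' genuinely routine.
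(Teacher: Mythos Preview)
Your approach is essentially the same as the paper's: both arguments work in the normal closure $L$ with $\Gal(L/\Q)\cong D_4$, identify the decomposition group of a prime $\mathfrak{P}$ above $\mathfrak{p}_1$, and compute $N_{\Phi^{\mathrm r}}(\mathfrak q)\O_L$ as the product $\psi_1(\mathfrak q)\psi_2(\mathfrak q)\O_L$. The paper's execution is slightly more streamlined---it observes directly that the decomposition group of $\mathfrak{P}$ equals $\Gal(L/K)$ (since $\mathfrak p_1$ has $e=f=1$ in $K$), avoiding your proposed case analysis, and pins down $\Phi_L^{-1}$ abstractly via the invariance conditions $\Phi_L\langle s\rangle=\Phi_L$ and $\Phi_L^{-1}\langle s'\rangle=\Phi_L^{-1}$ rather than through a concrete $\langle\rho,\tau\rangle$ presentation---but the underlying idea is the same.
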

\begin{proof}
   Let $\mathfrak{P}\subset\O_L$ be the unique prime over~$\mathfrak{p}_1$.
   Part (1) follows from the fact that the decomposition group
   of $\mathfrak{P}$
   is $\Gal(L/K)$ and that the inertia group has order~$e$.
   
   For part (2), let $s$ be the generator of $\Gal(L/K)$, let
   $s'$ be the generator of $\Gal(L/K^{\mathrm{r}})$
   and set $r=ss'$.
   Then $\Phi_L\subset\Gal(L/\Q)$ has $4$ elements and satisfies
   $\Phi_L\langle s\rangle = \Phi_L$ and
   $\Phi_L^{-1}\langle s'\rangle=\Phi_L^{-1}$, hence
   $\Phi_L^{-1}$ is $\{1,s,s',ss'\}$ or its complex conjugate,
   and we have $\Phi^{\mathrm{r}}=\{1,s_{|K^{\mathrm{r}}}\}$
   up to complex   conjugation.
   Take $\psi_1=1,\psi_2=s$.
   We compute
   \begin{eqnarray*}N_{\Phi^{\mathrm{r}}}(\mathfrak{q})\O_L&=&
     (\mathfrak{q}\O_L)({}^s\mathfrak{q}\O_L)=
     \left(\mathfrak{P} ({}^{s'}\mathfrak{P})\right)\left(({}^s\mathfrak{P})( {}^{ss'}\mathfrak{P})\right)\\
     &=&\mathfrak{P}^2 \left(({}^{s'}\mathfrak{P})( {}^{ss'}\mathfrak{P})\right)
     =(\mathfrak{p}_1^{2/e}\O_L) (\mathfrak{p}_2\O_L),
\end{eqnarray*}
   up to complex conjugation, which proves~(2).
\end{proof}
\begin{proof}[\textbf{\textup{{Proof of Lemma~\ref{lem:pidensity}}}}]
  Let $p$ be a prime number that is unramified in~$K$.
  We prove that $p$ is in $S$ if and only if its
  decomposition group in the normal closure of $K(j)$
  is of order $2$ and acts non-trivially on $K_0^{\mathrm{r}}$.
  Chebotarev's density theorem~\cite[Theorem 13.4]{neukirch} then 
  proves the formula for the density. Moreover, if $S$
  is non-empty, then $\sigma$ exists, hence
  the density is positive.
  
  Let $p$ be a prime number and let $\sigma\in G$ be its
  $p$-th power Frobenius.
  Suppose $p$ is in $S$ and write $p\O_K=\pfactor $.
  The image of $\sigma$ in $\Gal(L/\Q)$
  generates $\Gal(L/K)$ or its conjugate, hence has order~$2$.
  It follows that $p$ is inert in $K_0^{\mathrm{r}}/\Q$
  and splits into two factors $\mathfrak{q}$ and
  $\overline{\mathfrak{q}}$ in $K^{\mathrm{r}}$.
  Lemma \ref{lem:computetypenorm} shows that the type norm of $\mathfrak{q}$ is
  $N_{\Phi^{\mathrm{r}}}(\mathfrak{q})=\pifactor =\pi\O_K$
  or its complex conjugate, and we have $\pi\overline{\pi}\in\Q^*$,
  so we find $[\mathfrak{q}]\in H_0$, hence $\sigma^2$ is trivial
     on~$K^{\mathrm{r}}(j)$ and in particular on $\Q(j)$.

  Recall that $\Q(j)$ is the field generated over $\Q$ by
  the absolute Igusa invariants of $\liftC$ and that $\liftC$ is any
  curve with CM by $\mathcal{O}_K$.
  In particular, we can replace $\liftC$ by ${}^\tau \liftC $ for any
  automorphism $\tau$ of~$\overline{K}/\Q$.
  This shows that $\sigma^2$ is also trivial on ${}^\tau\Q(j)$
  for any $\tau$, and hence $\sigma^2$ is trivial on
  the normal closure of $\Q(j)$.
  As it is also trivial on the normal closure $L$ of $K$,
  we find that it is trivial on the normal closure of $K(j)$
  and hence $\sigma$ is in the set of Lemma~\ref{lem:pidensity}.

  Conversely, suppose that $\sigma^2$ is trivial
  and $\sigma$ is non-trivial on $K_0^{\mathrm{r}}$.
  As $\sigma_{|L}$ generates $\Gal(L/K)$ or a conjugate,
  we find that $p$ factors
  as $p\O_K=\pfactor $.
  Again, the prime $p$ is inert in $K_0^{\mathrm{r}}/\Q$
  and splits into two factors $\mathfrak{q}$ and
  $\overline{\mathfrak{q}}$ in $K^{\mathrm{r}}$
  with type norms $\pifactor $ and its complex conjugate.
  As we have $\sigma^2=1$, we find by Theorem \ref{thm:mainthmcm} that $\pifactor =\pi\O_K$ holds
  for some $\pi\in\O_K$ that satisfies $\pi\overline{\pi}\in \Q^*$. Since also 
  $\pi\overline{\pi}$ is positive and has absolute value $p^2$,
  it is a Weil $p^2$-number
  and $p$ is in $S$.
\end{proof}
\begin{example}
  For the field $K=\Q[X]/(X^4+12X^2+2)$
  of Example~\ref{ex:firstexample}, we can find
  $\Q(j)$ in the ECHIDNA database \cite{echidna}
  and compute that $\Q(j)$
  contains the field $F=\Q(\sqrt{2+\sqrt{2}})$, which is cyclic
  Galois over $\Q$ and contains $K_0^{\mathrm{r}}=\Q(\sqrt{2})$.
  Any automorphism of $F$ of order $2$ is trivial on $K_0^{\mathrm{r}}$,
  so the density of $S$ in Lemma~\ref{lem:pidensity} is~$0$
  and none of our algorithms works for this field.
\end{example}
\begin{example}
  For the field $K=\Q[X]/(X^4+20X^2+5)$, we have $13\in S$,
  so that $S$ has positive density and Algorithm~1 works for~$K$.
  However, the discriminant $d^{\mathrm{r}}=5$ of
  $K_0^{\mathrm{r}}=\Q(\sqrt{5})$ is a prime discriminant
  and occurs in the prime discriminant factorization
  $d=(-4)\cdot(5)\cdot( -19)$
  of $K_0$.
  This shows that $K$ does not satisfy the conditions of Lemma \ref{lem:alphadensity}, which are therefore too strong for Algorithm~1.
\end{example}

\section{Factorization of class polynomials modulo \texorpdfstring{$p$}{p}}
\label{sec:factorclasspolynomials}

While experimenting with the explicit CM construction for
curves of $p$-rank~$1$, we found that in the (ramified) case $e=2$
of Lemma \ref{lem:mainlemma}, the polynomial $H_1\bmod p$ has
no roots of multiplicity~$1$ in $\overline{\F_p}$,
which made working with $\widehat{H}_n$
impossible.
The current section explains this phenomenon, and shows how to adapt
$H_1,\widehat{H}_2,\widehat{H}_3$ to deal with this situation.
We also explain the analogue of this for the situation~$e=1$,
for which there is no problem.

Let $K$, $\liftC$, and $j$ be as in Section~\ref{sec:suffnec}.
If $j_1(\liftC)\not=0$ is a simple root of~$H_1$, which is `usually' the case,
then we have $\Q(j)=\Q(j_1(\liftC))$
since we can compute $j_n(\liftC)$ from $j_1(\liftC)$ using the polynomials $\widehat{H}_2$
and $\widehat{H}_3$
as we have seen in Section~\ref{sec:cmmethod}.
The Kummer-Dedekind theorem thus relates the factorization
of $(H_1\bmod p)\in\F_p[X]$ to the factorization of $p$ in (an order in)~$\Q(j)$.

\begin{lemma}\label{lem:incmclassfield}
  Let $p$ be a prime that factors in $K$
  as $p\O_K=\pfactor$,
  and let $n$ be the smallest positive integer such that
  $en$ is even and
  $(\mathfrak{p}_1\mathfrak{p}_2^{\smash{e/2}})^{n}$
  is generated
  by a Weil $p^n$-number~$\pi$.
  Then any prime $\mathfrak{q}$ of $K^{\mathrm{r}}$ lying over $p$
  decomposes in $K^{\mathrm{r}}(j)/K^{\mathrm{r}}$ into
  distinct primes of residue degree $en/2$.
\end{lemma}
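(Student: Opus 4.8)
The plan is to use the Artin isomorphism from the main theorem of complex multiplication (Theorem~\ref{thm:mainthmcm}) to translate the decomposition of $\mathfrak{q}$ in $K^{\mathrm{r}}(j)/K^{\mathrm{r}}$ into a statement about the order of the ideal class $[\mathfrak{q}]$ in the quotient group $\cl_{K^{\mathrm{r}}}/H_0$. First I would note that, since $e=2$ forces $p$ to ramify and $e$ appears symmetrically in the statement, the argument runs uniformly in~$e$; by Lemma~\ref{lem:computetypenorm}, the prime $p$ is inert in $K_0^{\mathrm{r}}/\Q$, splits as $\mathfrak{s}\O_{K^{\mathrm{r}}} = \mathfrak{q}\overline{\mathfrak{q}}$ (after passing to the appropriate prime $\mathfrak{q}$), and the type norm satisfies $N_{\Phi^{\mathrm{r}}}(\mathfrak{q}) = \mathfrak{p}_1^{2/e}\mathfrak{p}_2$ up to complex conjugation. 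Since $K^{\mathrm{r}}(j)/K^{\mathrm{r}}$ is abelian and unramified by Theorem~\ref{thm:mainthmcm}, the residue degree of a prime over $\mathfrak{q}$ equals the order of the Frobenius of $\mathfrak{q}$, which under the Artin map is the order of $[\mathfrak{q}]$ in $\cl_{K^{\mathrm{r}}}/H_0$, and the primes over $\mathfrak{q}$ are distinct precisely because the extension is unramified.

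Next I would identify that order with the integer $en/2$ from the statement. The key computation is that the $m$-th power $[\mathfrak{q}]^m$ lies in $H_0$ if and only if $N_{\Phi^{\mathrm{r}}}(\mathfrak{q}^m) = N_{\Phi^{\mathrm{r}}}(\mathfrak{q})^m = (\mathfrak{p}_1^{2/e}\mathfrak{p}_2)^m$ is principal, generated by an element $\mu \in K$ with $\mu\overline{\mu}\in\Q^*$. Because $\mathfrak{p}_1\overline{\mathfrak{p}_1}\mathfrak{p}_2 = p\O_K$ and $\mathfrak{p}_2 = \overline{\mathfrak{p}_2}$, the condition $\mu\overline{\mu}\in\Q^*$ is automatic once $\mu$ generates $(\mathfrak{p}_1^{2/e}\mathfrak{p}_2)^m$: indeed $\mu\overline{\mu}$ then generates $(\mathfrak{p}_1\overline{\mathfrak{p}_1})^{2m/e}\mathfrak{p}_2^{2m} = p^{2m/e}\mathfrak{p}_2^{2m - 2m/e}\O_K$, but I would want to check that this forces $\mu\overline{\mu}$ to be (up to a unit) a rational integer; this follows since $\mathfrak p_2$ has degree~$2$ over $\Q$ in $K_0$ so a power of it that is an $\O_K$-ideal of the form $\mathfrak p_2^{2k}$ with the $\mathfrak p_1$-part already a rational power is itself rational. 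After rewriting, the requirement is exactly that $(\mathfrak{p}_1^{2/e}\mathfrak{p}_2)^m$ is principal and generated by a Weil $p^{2m/e}$-number, i.e. that $m$ is a multiple of the minimal $n$ with $en$ even and $(\mathfrak{p}_1\mathfrak{p}_2^{e/2})^n$ generated by a Weil $p^n$-number — note $(\mathfrak{p}_1^{2/e}\mathfrak{p}_2)^m = (\mathfrak{p}_1\mathfrak{p}_2^{e/2})^{2m/e}$. Hence $\ord[\mathfrak q] = en/2$.

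The main obstacle I anticipate is the bookkeeping around units and the precise equivalence between "$N_{\Phi^{\mathrm r}}(\mathfrak q)^m$ principal with a generator $\mu$ satisfying $\mu\overline\mu\in\Q^*$" and "$(\mathfrak p_1\mathfrak p_2^{e/2})^n$ generated by a Weil $p^n$-number", especially reconciling the exponent $2/e$ in the type norm with the exponent $e/2$ appearing in the definition of $n$, and ensuring the Weil number condition $|\pi|^2 = p^n$ (positivity and rationality of $\mu\overline\mu$) matches up in both the ramified and unramified cases. Once that dictionary is pinned down, the residue degree and distinctness claims are immediate from Theorem~\ref{thm:mainthmcm} together with Chebotarev-free, purely algebraic properties of the Artin map for unramified abelian extensions. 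I would also remark that the hypothesis $j_1(\liftC)\neq 0$ being a simple root, used in the surrounding discussion to identify $\Q(j) = \Q(j_1(\liftC))$, is not needed for this lemma itself, which is a statement purely about the field $K^{\mathrm r}(j)$.
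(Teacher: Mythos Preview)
Your approach is the paper's: invoke Theorem~\ref{thm:mainthmcm} to reduce the residue degree of $\mathfrak{q}$ in the unramified abelian extension $K^{\mathrm r}(j)/K^{\mathrm r}$ to the order of $[\mathfrak{q}]$ in $\cl_{K^{\mathrm{r}}}/H_0$, compute $N_{\Phi^{\mathrm{r}}}(\mathfrak{q}^m)=(\mathfrak{p}_1^{2/e}\mathfrak{p}_2)^m$ via Lemma~\ref{lem:computetypenorm}, and match against the definition of~$n$.

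One slip worth correcting: the claim that $\mu\overline{\mu}\in\Q^*$ is \emph{automatic} for any generator $\mu$ of $(\mathfrak{p}_1^{2/e}\mathfrak{p}_2)^m$ is false. With the factorization $p\O_K=\mathfrak{p}_1\overline{\mathfrak{p}_1}\mathfrak{p}_2^{\,e}$ one finds $\mu\overline{\mu}\,\O_{K_0}=p^{2m/e}\O_{K_0}$ (your residual $\mathfrak{p}_2$-power should cancel completely), so $\mu\overline{\mu}=u\,p^{2m/e}$ for some totally positive unit $u\in\O_{K_0}^*$, and such $u$ need not lie in~$\Q$. This does not damage the argument, because the condition $\mu\overline{\mu}\in\Q^*$ is then \emph{equivalent} to $\mu\overline{\mu}=p^{2m/e}$, i.e.\ to $\mu$ being a Weil $p^{2m/e}$-number, which is exactly the condition already built into the definition of~$n$. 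So drop the ``automatic'' step and pass directly from $[\mathfrak{q}^m]\in H_0$ to ``$(\mathfrak{p}_1^{2/e}\mathfrak{p}_2)^m$ is generated by a Weil $p^{2m/e}$-number''; the minimal such $m$ is $en/2$, as the paper asserts in one line.
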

\begin{proof}
Recall from Theorem~\ref{thm:mainthmcm} that $K^{\mathrm{r}}(j)$
is the unramified abelian extension of $K^{\mathrm{r}}$
such that the Artin map induces an isomorphism
$\cl_K/H_0\rightarrow \mathrm{Gal}(K^{\mathrm{r}}(j)/K^{\mathrm{r}})$,
where $H_0\subset\cl_K$ 
is the subgroup
of ideal classes $[\mathfrak{a}]$
such that $N_{\Phi^{\mathrm{r}}}(\mathfrak{a})$
is principal and generated by
an element $\mu\in K$ with $\mu\overline{\mu}\in \Q^*$.

The Artin isomorphism 
sends $[\mathfrak{q}]$ to a generator of the decomposition
group of~$\mathfrak{q}$, so it suffices to prove that $[\mathfrak{q}]$
has order $en/2$ in the quotient group $\cl_{K^{\mathrm{r}}}/H_0$.
Lemma \ref{lem:computetypenorm} computes that  $N_{\Phi^{\mathrm{r}}}(\mathfrak{q}^m)$
is either $(\mathfrak{p}_1^{\smash{2/e}}\mathfrak{p}_2)^m$ or
its complex conjugate,
so the smallest integer $m$ with $[\mathfrak{q}^m]\in H_0$
is exactly $m=en/2$.
\end{proof}
\begin{corollary}\label{cor:splittinginqj}
  Let $p,n$ be as in Lemma \ref{lem:incmclassfield}. Then $p$ splits
  into prime factors of residue degree $n$ in $\Q(j)/\Q$.
  Each factor occurs exactly $e$ times.
\end{corollary}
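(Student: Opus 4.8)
The plan is to deduce Corollary~\ref{cor:splittinginqj} from Lemma~\ref{lem:incmclassfield} by translating information about the splitting of $\mathfrak{q}$ in $K^{\mathrm{r}}(j)/K^{\mathrm{r}}$ into information about the splitting of $p$ in $\Q(j)/\Q$, using the tower $\Q\subset K_0^{\mathrm{r}}\subset K^{\mathrm{r}}\subset K^{\mathrm{r}}(j)$ together with the inclusion $\Q(j)\subset K^{\mathrm{r}}(j)$ (see Figure~\ref{figure:fielddiagram}). First I would record, from Lemma~\ref{lem:computetypenorm}(1) and the proof of Lemma~\ref{lem:pidensity}, that a prime $p$ with $p\O_K=\pfactor$ is inert in $K_0^{\mathrm{r}}/\Q$ and splits as $\mathfrak{s}\O_{K^{\mathrm{r}}}=\mathfrak{q}\overline{\mathfrak{q}}$ in $K^{\mathrm{r}}$, with $f(\mathfrak{q}/p)=2$ and $e(\mathfrak{q}/p)=e$ (the ramification coming from $p\O_{K_0^{\mathrm{r}}}=\mathfrak{s}^e$). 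By Lemma~\ref{lem:incmclassfield}, $\mathfrak{q}$ is unramified in $K^{\mathrm{r}}(j)/K^{\mathrm{r}}$ and splits there into distinct primes of residue degree $en/2$; hence each prime $\mathfrak{Q}$ of $K^{\mathrm{r}}(j)$ over $p$ has $f(\mathfrak{Q}/p)=2\cdot(en/2)=en$ and $e(\mathfrak{Q}/p)=e$.

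Next I would descend to $\Q(j)$. The field $K^{\mathrm{r}}(j)$ is the compositum of $K^{\mathrm{r}}$ and $\Q(j)$, and since $K^{\mathrm{r}}(j)/K^{\mathrm{r}}$ is unramified while $K^{\mathrm{r}}/\Q$ carries all the ramification at $p$ (ramification index $e$, already realized inside $K^{\mathrm{r}}$), the ramification index of any prime of $\Q(j)$ over $p$ is a divisor of $e$. To pin it down exactly I would use that the primes $\mathfrak{p}_1$ and $\mathfrak{p}_2$ of $K$ have ramification index $e$ over $p$, together with the formula $N_{\Phi^{\mathrm{r}}}(\mathfrak{q})=\mathfrak{p}_1^{2/e}\mathfrak{p}_2$ from Lemma~\ref{lem:computetypenorm}(2): this forces genuine ramification of degree $e$ to be visible in the extension $K^{\mathrm{r}}(j)/\Q$ and, by the main theorem of complex multiplication identifying $K^{\mathrm{r}}(j)/K^{\mathrm{r}}$ as \emph{unramified}, this ramification must already occur in $\Q(j)/\Q$. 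A cleaner route is to count: write $p\O_{\Q(j)}=\prod_i \mathfrak{a}_i^{\varepsilon}$ with common residue degree $\varphi$ (these are common because $\Q(j)$ need not be Galois, but the decomposition types are governed by the Frobenius conjugacy class of $\sigma$ acting on the normal closure, and $\sigma$ has a well-defined order), and use $\Q(j)\cdot K^{\mathrm{r}}=K^{\mathrm{r}}(j)$ with $K^{\mathrm{r}}/\Q$ of degree $4$ and $f(\mathfrak{Q}/p)=en$, $e(\mathfrak{Q}/p)=e$ to solve for $\varepsilon$ and $\varphi$. Since $[K^{\mathrm{r}}:\Q]=4$ with $\mathfrak{q}$ having $e\cdot f = e\cdot 2$ and $K^{\mathrm{r}}(j)/K^{\mathrm{r}}$ unramified of residue degree $en/2$, comparing with the factorization over $\Q(j)$ gives $\varepsilon=e$ and $\varphi=n$, which is exactly the claim.

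The main obstacle I expect is the non-Galois descent: $\Q(j)/\Q$ is generally not Galois, so I cannot simply read off the splitting type from a single ramification/residue pair, and I must argue via the Frobenius conjugacy class acting on the Galois closure of $K(j)$ (as in the proof of Lemma~\ref{lem:pidensity}) that all primes of $\Q(j)$ over such a $p$ do have the same residue degree $n$ and the same multiplicity $e$. Concretely, the order of $\sigma$ (the $p$-Frobenius in $G=\Gal(\widetilde{K(j)}/\Q)$) controls the residue degrees of all primes above $p$ uniformly in the subfield $\Q(j)$, and the inertia subgroup at $p$ -- which has order $e$ and is already detected inside $K_0^{\mathrm{r}}$ -- accounts for the multiplicity. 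I would therefore spell out that the Frobenius of $\mathfrak{q}$ in $\Gal(K^{\mathrm{r}}(j)/K^{\mathrm{r}})$ has order $en/2$ by Lemma~\ref{lem:incmclassfield}, lift this to the decomposition group of $p$ in $G$, restrict to $\Q(j)$, and conclude that the residue degree in $\Q(j)/\Q$ is $n$ and each prime occurs with multiplicity $e$, which finishes the proof.
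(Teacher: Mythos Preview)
Your overall strategy---compute $e$ and $f$ for primes of $K^{\mathrm{r}}(j)$ over $p$ via the tower $\Q\subset K_0^{\mathrm{r}}\subset K^{\mathrm{r}}\subset K^{\mathrm{r}}(j)$, then descend to $\Q(j)$---is exactly the paper's. But there is a computational slip and a missing ingredient.

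The slip: you assert $p$ is inert in $K_0^{\mathrm{r}}/\Q$ and $f(\mathfrak{q}/p)=2$. By Lemma~\ref{lem:computetypenorm}(1), $p\O_{K_0^{\mathrm{r}}}=\mathfrak{s}^e$, so for $e=2$ the prime $p$ is \emph{ramified} in $K_0^{\mathrm{r}}$ with $f(\mathfrak{s}/p)=1$; in general $f(\mathfrak{q}/p)=2/e$, not $2$. Consequently $f(\mathfrak{Q}/p)=(2/e)(en/2)=n$, not $en$. (For $e=1$ your formula happens to give the right number, but the corollary is stated for both values of $e$, and it is precisely the ramified case $e=2$ that feeds into Corollary~\ref{cor:explainphenomenon}.)

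The missing ingredient is the descent step. Knowing $e(\mathfrak{Q}/p)=e$ and $f(\mathfrak{Q}/p)=n$ in $K^{\mathrm{r}}(j)/\Q$ only gives divisibility for the invariants in $\Q(j)/\Q$; to get equality you must control the step $K^{\mathrm{r}}(j)/\Q(j)$. The paper does this in one line by using the inclusion $K_0^{\mathrm{r}}\subset \Q(j)$ (recorded before Theorem~\ref{thm:mainthmcm} and in Figure~\ref{figure:fielddiagram}): since $K^{\mathrm{r}}/K_0^{\mathrm{r}}$ is quadratic with $\mathfrak{s}$ split, the extension $K^{\mathrm{r}}(j)/\Q(j)$ contributes trivial $e$ and $f$ at every prime above $p$, and the desired equalities follow by multiplicativity in towers. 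You never invoke $K_0^{\mathrm{r}}\subset\Q(j)$, and your proposed workaround---reading residue degrees off the order of a Frobenius element in the Galois closure---does not work as stated: in a non-Galois subfield the residue degrees above $p$ are governed by the \emph{cycle structure} of Frobenius on cosets, not merely its order, so uniformity is not automatic. Once you add the single fact $K_0^{\mathrm{r}}\subset\Q(j)$ and correct $f(\mathfrak{q}/p)$ to $2/e$, the Galois-closure detour becomes unnecessary and your argument collapses to the paper's.
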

\begin{proof}
  Each prime factor $\mathfrak{p}$ has residue degree $en/2$ in $K^{\mathrm{r}}(j)/K^{\mathrm{r}}$
  by Lemma \ref{lem:incmclassfield} and $2/e$ in $K^{\mathrm{r}}/\Q$ by Lemma \ref{lem:inreflex},
  hence $n$ in $K^{\mathrm{r}}(j)/\Q$.
  As all ramification of $\mathfrak{p}$ takes place in $K^{\mathrm{r}}_0/\Q$,
  we find that the ramification index of $\mathfrak{p}$
  in $K^{\mathrm{r}}(j)/\Q$ is~$e$.
  
  We have seen in Figure~\ref{figure:fielddiagram}
  on page~\pageref{figure:fielddiagram} that $\Q(j)$
  contains $K_0^{\mathrm{r}}$.
  As the residue degree and ramification index of $\mathfrak{p}$
  in $K^{\mathrm{r}}/K_0^{\mathrm{r}}$ are $1$,
  we find that
  the residue degree and ramification
  index of $\mathfrak{p}$ are also $n$ and $e$ in $\Q(j)/\Q$.  
\end{proof}
\begin{corollary}\label{cor:explainphenomenon}
  If $p$ factors in $K$ as $p\O_K=\pfactor ^2$,
  then $(H_1\bmod p)\in\F_p[X]$ has no roots of multiplicity~$1$ in $\overline{\F_p}$.
\end{corollary}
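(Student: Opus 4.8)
The plan is to argue by contradiction. Assume $(H_1\bmod p)$ has a root $\bar\jmath\in\overline{\F_p}$ of multiplicity exactly~$1$. I will show that this forces $p$ to have an unramified prime above it in the field of moduli $\Q(j)$, contradicting Corollary~\ref{cor:splittinginqj} applied with $e=2$, which asserts that every prime of $\Q(j)$ above $p$ occurs with ramification index~$e=2$.

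For $(H_1\bmod p)$ to make sense, the coefficients of $H_1$ are $p$-integral, and since $H_1=\prod_{\liftC}(X-j_1(\liftC))$ is monic, all its roots $j_1(\liftC)$ are integral over $\Z_{(p)}$. Let $F$ be the splitting field of $H_1$ over $\Q$; it is a number field, Galois over $\Q$, containing every root of $H_1$. Fix a prime $\mathfrak{P}$ of $F$ above $p$ and let $\rho$ denote reduction modulo $\mathfrak{P}$ on the localization $\O_{F,\mathfrak{P}}$; then $(H_1\bmod p)$ splits completely over the residue field, and the multiplicity of a root $\bar\beta$ equals the number of roots $\alpha$ of $H_1$ in $F$ with $\rho(\alpha)=\bar\beta$, counted with their multiplicity in $H_1$. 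Applying this to our $\bar\jmath$: by multiplicity~$1$ there is a \emph{unique} root $j_0=j_1(\liftC)$ of $H_1$ with $\rho(j_0)=\bar\jmath$, and $j_0$ is a simple root of $H_1$. By the discussion in Section~\ref{sec:cmmethod} following the definition of the polynomials $\widehat{H}_n$, a curve $\liftC$ for which $j_1(\liftC)\ne 0$ is a simple root of $H_1$ has field of moduli $\Q(j)=\Q(j_1(\liftC))=\Q(j_0)$.

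The key step is then an inertia-group argument. Let $I\subseteq\Gal(F/\Q)$ be the inertia group of $\mathfrak{P}$. Every $\tau\in I$ fixes $\mathfrak{P}$ and acts trivially on the residue field, so $\rho(\tau\alpha)=\rho(\alpha)$ for all $\alpha\in\O_{F,\mathfrak{P}}$; moreover $\tau$ permutes the roots of $H_1$ since $H_1\in\Q[X]$. Hence $\tau j_0$ is a root of $H_1$ with $\rho(\tau j_0)=\bar\jmath$, and by uniqueness $\tau j_0=j_0$. Thus $I$ fixes $j_0$, so $I\subseteq\Gal(F/\Q(j_0))=\Gal(F/\Q(j))$. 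Since $F/\Q(j)$ is Galois, the inertia group of $\mathfrak{P}$ over the prime $\mathfrak{q}:=\mathfrak{P}\cap\O_{\Q(j)}$ is $I\cap\Gal(F/\Q(j))=I$, whose order is $e(\mathfrak{P}/p)$; therefore $e(\mathfrak{P}/\mathfrak{q})=|I|=e(\mathfrak{P}/p)$ and hence $e(\mathfrak{q}/p)=1$. This is the unramified prime of $\Q(j)$ promised above, contradicting Corollary~\ref{cor:splittinginqj} with $e=2$, which completes the proof.

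The one place needing care is the passage "$\Q(j)=\Q(j_0)$", together with the exceptional case in which $\bar\jmath=0$ and its unique lift $j_0$ is literally $0$: then $\Q(j_0)=\Q$ can be strictly smaller than $\Q(j)$ and the inertia argument only yields the vacuous unramifiedness of $\Q$. One disposes of this either by invoking the convention $j_1\ne 0$ already in force for the explicit CM construction of Section~\ref{sec:cmmethod}, so that only nonzero roots are relevant, or by treating a CM curve with $j_1(\liftC)=0$ directly. Everything else is the routine bookkeeping with decomposition and inertia groups sketched above.
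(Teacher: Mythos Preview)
Your argument is correct and arrives at the same contradiction with Corollary~\ref{cor:splittinginqj} that the paper does, but via a different technical device. The paper takes an irreducible factor $f\in\Z[X]$ of $H_1(cX)$, forms the order $\O=\Z[X]/f\subset\Q(j)$, and applies the Kummer--Dedekind theorem: since every prime of $\Q(j)$ above $p$ is ramified (by Corollary~\ref{cor:splittinginqj} with $e=2$), each prime $(p,g(X))$ of $\O$ is ramified or singular, and in either case $g$ divides $H_1\bmod p$ to order at least~$2$. You instead pass to the splitting field $F$ of $H_1$ and run an inertia-group argument: a simple root of $H_1\bmod p$ would have a unique lift $j_0$, the inertia group would fix $j_0$ and hence sit inside $\Gal(F/\Q(j_0))=\Gal(F/\Q(j))$, forcing $e(\mathfrak{q}/p)=1$ for $\mathfrak{q}=\mathfrak{P}\cap\Q(j)$. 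These are the two standard translations between the mod-$p$ factorization of a generator's minimal polynomial and the decomposition of $p$ in the field it generates; the paper's version avoids introducing the Galois closure, while yours makes the ramification mechanism more transparent. Your caveat about the degenerate case $j_0=0$ is honest and in fact applies equally to the paper's proof, which also implicitly uses $\Q(j)=\Q(j_1(C))$ when asserting that $\Z[X]/f$ is an order in~$\Q(j)$.
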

\begin{proof}
  The polynomial $H_1\in\Q[X]$ is monic and the denominators
  of the coefficients are not divisible by $p$ because they are Igusa invariants
  of a curve that has potential good reduction modulo~$p$.
  Let $c\in\Z$ not divisible by $p$ be such that $H_1(cX)$ is in $\Z[X]$ and
  let $f\in\Z[X]$ be an arbitrary irreducible factor of $H_1(cX)\in\Z[X]$.
  We find an order $\O=\Z[X]/f$ in~$\Q(j)$.
  Each irreducible factor $g\in\F_p[X]$ of $(H_1\bmod p)$ corresponds to the prime
  ideal $\mathfrak{p}=(p,g(X))$ of $\O$.
  As every prime over $p$ ramifies in $\Q(j)/\Q$ by Corollary~\ref{cor:splittinginqj}, we find that $\mathfrak{p}$
  is either ramified or singular. 
  By the Kummer-Dedekind theorem (Theorem~8.2 of \cite{stevenhagen}),
  both cases imply that the roots of $g$ have multiplicity at least $2$
  as roots of~$H_1$.
\end{proof}
This shows that $H_1,\widehat{H}_1,\widehat{H}_2$ cannot be used
for the case $e=2$.
To get around this, we replace $H_1$ by an irreducible
factor $f\in K_0^{\mathrm{r}}[X]$ and
$\widehat{H}_n$ by the unique polynomial $S_{n}$ of degree at most $\mathrm{deg}(f)-1$
that is congruent modulo $f$ to $\widehat{H}_{n}(H_1/f)^{-1}$.
If we write $p\O_{K^{\mathrm{r}}}=\mathfrak{s}^2$, 
then $(f\bmod\mathfrak{s}),(S_2\bmod\mathfrak{s}),(S_3\bmod\mathfrak{s})\in \F_p[X]$
can be used in exactly the same way as
$(H_1\bmod p),(\widehat{H}_2\bmod p),(\widehat{H}_3\bmod p)$
and do not suffer from Corollary~\ref{cor:explainphenomenon}.
\begin{corollary}
  For all but finitely many of the primes $p$ that decompose
    as $p\O_K=\pfactor ^e$,
    the reduction $(H_1\bmod p)\in\F_p[X]$ is a product of distinct irreducible
    polynomials in $\F_p[X]$ of degree $n$ for $n$
    given in Lemma~\ref{lem:incmclassfield} (and depending on $p$).
\end{corollary}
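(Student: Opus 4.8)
The plan is to deduce the factorization of $(H_1\bmod p)$ from the way $p$ decomposes in the field of moduli $\Q(j)$, using the Kummer--Dedekind theorem to pass between the two and Corollary~\ref{cor:splittinginqj} to control the decomposition of~$p$.

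First I would dispose of the case $e=2$: then $p$ ramifies in $K$ and hence divides $\mathrm{disc}(K)$, so only finitely many primes satisfy $p\O_K=\pfactor^2$ and the assertion is vacuous for them. So assume $p\O_K=\pfactor$; then $n$ is even, in particular $n\ge 2$. As recalled in the proof of Corollary~\ref{cor:explainphenomenon}, $H_1\in\Q[X]$ is monic with coefficients whose denominators are prime to $p$, being Igusa invariants of a curve with potential good reduction at~$p$; fix $c\in\Z$ prime to $p$ with $H_1(cX)\in\Z[X]$ and let $f$ range over the irreducible factors of $H_1(cX)$, so that each $\Z[X]/f$ is an order in $\Q(j)=\Q(j_1(\liftC))$. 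Now discard the finitely many primes $p$ that divide the index of one of these orders in its maximal order, or the resultant of two distinct such factors $f$. For every remaining $p$, the Kummer--Dedekind theorem (Theorem~8.2 of \cite{stevenhagen}) identifies the irreducible factorization of $(H_1\bmod p)$ over $\F_p$, counted with multiplicity, with the prime decomposition of $p$ in $\O_{\Q(j)}$: each prime $\mathfrak{P}\mid p$ contributes, with multiplicity equal to its ramification index over~$p$, an irreducible polynomial of degree equal to the residue degree of $\mathfrak{P}$ over~$p$, and distinct primes over $p$ contribute coprime polynomials.

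It remains to feed in Corollary~\ref{cor:splittinginqj}. With $e=1$ it asserts that $p$ decomposes in $\Q(j)/\Q$ into \emph{distinct} primes, each of residue degree $n$, where $n$ is the integer of Lemma~\ref{lem:incmclassfield} (depending on~$p$). So in the Kummer--Dedekind description above all ramification indices are $1$ and all residue degrees are $n$, whence $(H_1\bmod p)$ is a product of distinct irreducible polynomials in $\F_p[X]$, each of degree~$n$.

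The step I expect to demand the most care is guaranteeing that the reduction mod $p$ faithfully records the decomposition of $p$ in $\O_{\Q(j)}$. This rests on the identity $\Q(j)=\Q(j_1(\liftC))$ --- the ``usual'' situation, in which $j_1(\liftC)$ is a simple root of $H_1$; should it fail for the input field $K$, then $H_1$ need not be separable over $\Q$ and one would have to argue instead with a factor of $H_1$ over $K_0^{\mathrm{r}}$ in the spirit of the polynomials $f,S_2,S_3$ introduced earlier in this section. It also rests on the orders $\Z[X]/f$ being maximal at $p$, so that the irreducible factors of $(H_1\bmod p)$ are indexed precisely by the primes of $\O_{\Q(j)}$ above~$p$. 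Each of these requirements excludes only finitely many primes and so does not affect the statement; the substantive input is Corollary~\ref{cor:splittinginqj}, which simultaneously pins down the common degree $n$ and, through $e=1$ --- equivalently, through $p$ being unramified in $\Q(j)$ --- the absence of repeated factors.
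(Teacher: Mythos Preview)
Your proposal is correct and follows essentially the same route as the paper: exclude finitely many primes so that Kummer--Dedekind applies to the orders $\Z[X]/f$, then invoke Corollary~\ref{cor:splittinginqj} to read off both the common degree $n$ and the absence of repeated factors. The only cosmetic difference is that the paper disposes of all bad primes at once by excluding those dividing the discriminant of $H_1$ (which automatically rules out the $e=2$ primes via Corollary~\ref{cor:explainphenomenon}), whereas you treat $e=2$ separately by observing that ramification in $K$ forces $p\mid\mathrm{disc}(K)$; your extra care about the possibility that $H_1$ is not separable over~$\Q$ is a point the paper's proof glosses over.
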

\begin{proof}
  We exclude the primes dividing the denominator of any coefficient of~$H_1$,
  as well as those dividing the discriminant.
  Then all roots of $(H_1\bmod p)$ in $\overline{\F_p}$ are simple roots.
  Let $f,\O$ be as in the proof of Corollary \ref{cor:explainphenomenon}.
  Then $p$ does not divide the index of $\O$ in its maximal order.
  The fact that every prime of $\Q(j)$ has residue degree $n$ implies
  that every irreducible factor of $f\bmod p$ has degree~$n$.
\end{proof}

\section{Examples}\label{sec:exalg1}\label{examples}
\subsection*{Algorithm 1}

We provide examples of $p$-rank-$1$ curves $C/\F_{p^2}$
such that the Jacobian $J_C$ is simple and has prime order.
The {\cmfield } for all examples is
$K=\Q(\alpha)$, where $\alpha$ is a root of
the polynomial $X^4 + 34X^2 + 217 \in \Q[X]$,
which satisfies the conditions of Lemma~\ref{lem:alphadensity}.
We give the prime~$p$, the coefficients
$a_1$ and $a_2$ of the minimal polynomial
$$f=X^4-a_1X^3+(a_2+2p^2)X^2-a_1p^2X+p^4$$
of the Frobenius endomorphism and the
coefficients $c_i \in \F_{p^2}$ of the curve equation
\[C: y^2 = c_6x^6 + c_4x^4 + c_3x^3 + c_2x^2 + c_1x + c_0.\]
The group order of the Jacobian is 
$\#J_C(\F_{p^2}) = N(\pi-1)=f(1)$. 
The field $\F_{p^2}$ is given as $\F_p(\sigma)$, where $\sigma^2=-3$.
Section
headings describe the number of bits of
the group order $\#J_C(\F_{p^2})$.

Each example was generated in a few seconds on a standard PC after pre-computation
of the Igusa class polynomials of~$K$.
\subsubsection*{160-bit group size}
\begin{eqnarray*}
p & = & 924575392409, \quad
a_1  =  3396725192754\\
a_2 & = &2876182159630959921399337,\quad
 c_6  = \sigma \\
 c_4 & = & 349419850452\cdot\sigma + 621473390194\\
 c_3 & = & 638315825844\cdot\sigma + 895470286740\\
 c_2 & = & 247903071476\cdot\sigma + 504258872407\\
 c_1 & = & 494346973570\cdot\sigma + 326558224146\\
 c_0 & = & 721392332677\cdot\sigma + 210623692149
\end{eqnarray*}

\subsubsection*{192-bit group size}
\begin{eqnarray*}
p & = & 236691298903769,\quad
a_1  =  -9692493559086\\
a_2 & = &-58992172275797931791883572663,\quad
c_6  =  \sigma\\
c_4 & = & 144046547562595\sigma + 31854049506043\\
c_3 & = & 134634542821316\sigma + 20155601614364\\
c_2 & = & 159093189820788\sigma + 52669766944798\\
c_1 & = & 223684436822489\sigma + 66232364455191\\
c_0 & = & 206430094481010\sigma + 170879851904277
\end{eqnarray*}

\subsubsection*{256-bit group size}
\begin{eqnarray*}
p & = & 15511800964685067143,\quad
a_1  =  2183138494024250742\\
a_2 & = & -871403391229975003782565554464700664457,\quad
c_6 = 1\\
c_4 &=& 7019198877313644539\cdot\sigma + 8886572032497699458\\
c_3 &=& 8069566800142565548\cdot\sigma + 11092851174307405252\\
c_2 &=&8339873208295381793\cdot\sigma + 13688811293938352344\\
c_1 &=&10474983032301001361\cdot\sigma + 14509908493781086362\\
c_0 &=&4803877905347330504\cdot\sigma+ 12900291622358663970
\end{eqnarray*}


\subsection*{Algorithm 3}

\subsubsection*{192-bit group size, embedding degree $12$}

Let $K$ be the field $K=\Q[X]/(X^4+13X^2+41)$ and
let $\kappa=12$. It took a few seconds
to find the smallest prime $r>2^{192}$ that
splits completely in $K$ and
$\Q(\zeta_{12})$, which is $r=2^{192}+18513$.
We ran Algorithm~3 with input $K,\kappa,r$.
The algorithm terminated after about $11$ minutes and
found a prime $p$ and a Weil $p^2$ number with $p$-rank~$1$
and embedding degree $12$ with respect to a subgroup of order~$r$.
Using $p$ and precomputed Igusa class polynomials, we were able
to find an equation for the corresponding hyperelliptic
curve $C$ in less than a second.
We only give $p$, because $\pi$ and the coefficients of $C$
would take up too much space.
\small
\begin{eqnarray*}p
&=&
1420038565958074827476353870489770880715201360323415690146120568\\
& &6404970976014364663695672498066437749119607973051961772352102985\\
& &5649462172148699393958968638652107696147277436345811056227385195\\
& &781997362304851932650270514293705125991379
\end{eqnarray*}
\normalsize



\bigskip

\noindent {\bf Acknowledgements.}
We thank
Peter Bruin,\aimention{Bruin, Peter}
David Kohel,\aimention{Kohel, David}
Tanja Lange,\aimention{Lange, Tanja}
Hendrik~Lenstra,\aimention{Lenstra, Hendrik W.}
Joe Silverman,\aimention{Silverman, Joseph H.}
and Peter Stevenhagen\aimention{Stevenhagen, Peter}
for helpful advice.

\bibliographystyle{plain}
\bibliography{CMbib}

\end{document}